\newtheorem{theorem}{Theorem}[section]
\newtheorem{remark}{Remark}[section]
\newtheorem{lemma}{Lemma}[section]
\newtheorem{proposition}{Proposition}[section]
\newtheorem{corollary}{Corollary}[section]
\newtheorem{definition}{Definition}[section]
\newtheorem{simulation}{Simulation}[section]
\begin{document}

\newcommand\bbP{\mathop{\mathbb{P}}}
\newcommand\bbE{\mathop{\mathbb{E}}}
\renewcommand\hat[1]{\widehat{#1}}
\newcommand\hatE{\hat{\bbE}}
\newcommand\tI{\text{I}}
\newcommand\tII{\text{II}}
\newcommand\tIII{\text{III}}
\title{A robust  and p-hacking-proof significance test under variance uncertainty\let\thefootnote\relax\footnotetext{Li Xifeng, Yang Shuzhen, and Yao Jianfeng contributed equally to this article, the authors are listed alphabetically. We thank to Professor Shige Peng for rounds of useful discussions.}}
\author{Xifeng Li,\thanks{School of Mathematics, Shandong University, PR China, lixfay@mail.sdu.edu.cn.} \quad Shuzhen Yang,\thanks{Shandong University-Zhong Tai Securities Institute for Financial Studies, Shandong University, PR China, yangsz@sdu.edu.cn.} \quad Jianfeng Yao\thanks{Corresponding author. School of Data Science, The Chinese University of Hong Kong, Shenzhen, jeffyao@cuhk.edu.cn.}}
\date{}
\maketitle
\begin{abstract}
P-hacking poses challenges to traditional hypothesis testing. In this paper, we propose a robust method for the one-sample significance test that can protect against p-hacking from sample manipulation. Precisely, assuming a sequential arrival of the data whose variance can be time-varying and for which only lower and upper bounds are assumed to exist with possibly unknown values,  we use the modern theory of sublinear expectation to build a testing procedure which is robust under such variance uncertainty, and can protect the significance level against potential data manipulation by an experimenter. It is shown that our new method can effectively control the type I error while preserving a satisfactory  power, yet a traditional rejection criterion performs poorly under such variance uncertainty. Our theoretical results are well confirmed by a detailed simulation study.
\vskip4mm
\noindent\textbf{Keywords:} Significance test;  P-hacking; Variance uncertainty; Sublinear expectation; Robust test.
\end{abstract}


\section{Introduction}
P-hacking refers to the manipulation of statistical analyses to achieve a desired p-value, typically below the conventional threshold of 0.05, which is often interpreted as statistically significant.    P-hacking is problematic because it can lead to false positive results that appear significant but are actually due to chance.
The phenomenon becomes more frequent recently in psychology studies, nutrition research, pharmaceutical trials, medical research, and social science surveys.     P-hacking occurs when experimenters (consciously or unconsciously) take advantage of their flexibility in data collection, analysis, and reporting.   \cite{simmons2011false} gave several examples of p-hacking.Typical p-hacking methods include
(i) Data dredging, that is, analyzing data in multiple ways and only reporting those analyses that yield significant results;
(ii) Selective reporting: only publishing results that support a hypothesis while ignoring those that do not;
(iii) Sample manipulation: changing the sample size or excluding certain data points to achieve a significant result; and
(iv) Variable manipulation: altering how variables are defined or measured to find a significant association.
 P-hacking seriously undermines the validity of scientific research and the credibility of scientific reports. For more results on p-hacking, see \cite{head2015extent}, \cite{moss2023modelling} and references therein.

In this paper, we consider a specific p-hacking problem that can be formulated through the simple one-sample significance test of the mean of a population. The difficulty we address here is that the variance of the data is highly uncertain, for which we only know the existence of a lower bound $\underline{\sigma}^{2}$ and an upper bound $\overline{\sigma}^{2}$ whose values are possibly unknown.  When the data arrive sequentially, the experimenter appears to have the opportunity to include or exclude some data points based on historical information in order to artificially inflate the significance of the test (see details in Section \ref{sec:3}).

Because of lack of information on the variance of the data which can be in particular time-varying, it is actually   difficult to construct an accurate  one-sample test that respects a preassigned significance level.  An innovative solution from this paper is that building on   the modern theory of sublinear expectation we can  design a robust rejection region that  protects against data manipulation and  guarantee a significance level close to the nominal one in the presence of a high variability of data variances.

Both theoretical analysis and simulation studies show that our procedure ensures an actual type I error rate close to the nominal one, even in the scenario where the data had been manipulated by a biased experimenter using some data selection method. Specifically, our results are obtained by considering a worst scenario among a family of distributions that accounts for the underlying variance uncertainty, and this worst scenario consideration indeed protects us against potential data manipulation.

The rest of the paper is organized as follows. Section~\ref{sec:sublin} introduces the basic concepts of the theory of sublinear expectation relevant to this paper. In Section \ref{sec:2}, we introduce our robust testing method under variance uncertainty. In Section \ref{sec:3}, we provide a theoretical analysis of the proposed method under a scenario where the experimenter is selecting the data points according to an optimal  p-hacking strategy, and compare it with the traditional one-sample test. Section \ref{sec:4} presents simulation studies to verify our theoretical results. Finally,  Section \ref{sec:5} concludes the paper with some discussions.

\section{The theory of sublinear expectation}
\label{sec:sublin}

Consider the space $\mathcal{L}_1$ of integrable and real-valued random variables
defined on a traditional probability space $(\Omega,\mathcal{F},\bbP)$. The corresponding expectation $\bbE$ is a linear operator on  $\mathcal{L}_1$ and is hereafter referred as the {\em linear expectation}. In a sense, we are considering a linear expectation space
$(\Omega,\mathcal{L}_1,\bbE)$ where $\Omega$ is the base space, $\mathcal L_1$ the space of integrable random variables and $\bbE$ the linear expectation operator. The theory of sublinear expectation can here be viewed as a sophisticated extension of the triple
$(\Omega,\mathcal{L}_1,\bbE)$  in order to capture distribution uncertainties underlying random
variables. Precisely, the extension  uses a new triple, $(\Omega,\mathcal{H},\hatE)$, replacing $(\Omega,\mathcal{L}_1,\bbE)$, where  $\mathcal H$  is an appropriate space of random variables, and $\hatE$ is the sublinear expectation acting on $\mathcal H$. Moreover, $\hatE$ is realized through the maximization:
$$
\hatE[X]=\sup\limits_{\theta \in \Theta} {\mathbb E}_{\theta}[X],\quad \forall X \in \mathcal{H},
$$
where  $\{\bbE_{\theta}\}_{\theta\in\Theta}$ is  a family of linear expectations, or linear functionals defined on $(\Omega,\mathcal{H})$.

The theory of nonlinear expectation of which sublinear expectation is a prominent offspring, was introduced in probability theory by \cite{peng2004filtration,peng2005nonlinear} to deal with model or distribution uncertainty. In particular, the theory considers a family of probability measures $\{\bbP_{\theta}\}_{\theta\in\Theta}$, with corresponding linear expectations $\{\bbE_{\theta}\}_{\theta\in\Theta}$, rather than a single probability measure $\bbP_{\theta_0}$ for the observed data. In the last two decades, the theory and methodology of sublinear expectation have been well developed into
an established area of modern probability theory, and have found multiple applications in statistics, mathematical finance, and machine learning, see, for example, \cite{epstein2013ambiguous}, \cite{lin2016k}, \cite{peng2020hypothesis}, \cite{jin2021optimal}, \cite{peng2023improving}, \cite{ji2023imbalanced}, among others. We refer to Peng's plenary talk at ICM 2010 \citep{PengICM2010} and his monograph \citep{peng2019nonlinear} for a complete account of the theory of nonlinear/sublinear expectations. For reader's convenience, the results of the theory which are relevant to the paper are summarized  in Appendix~\ref{sec:A}.

\section{The significance test under variance uncertainty}\label{sec:2}


Consider a data sequence $X_{1}, X_{2},\ldots,X_{n},\ldots$ with a common mean value $\mu\in\mathbb{R}$, while their variance is uncertain and undetermined. As said in Introduction, the framework of sublinear expectation is adopted to model such a data sequence. Precisely, the data generation process is as follows.

\medskip
\noindent\textbf{Data generation process [*]} On a probability space $(\Omega,{\mathcal F},\bbP)$, let $\varepsilon_{1},\varepsilon_{2},\ldots$ be a sequence of i.i.d. random variables such that $\bbE(\varepsilon_{i})=0$, $\bbE(\varepsilon_{i}^{2})=1$  and $\bbE(|\varepsilon_{i}|^{3})<\infty$. Let $ (\mathcal{F}_{i})_{i=0}^{\infty}$ be the natural filtration generated
by $\{\varepsilon_{i}\}_{i=1}^{\infty}$, i.e. $\mathcal{F}_{i}=\sigma(\varepsilon_{1},\ldots,\varepsilon_{i}).$  For two
given positive constants $\underline{\sigma}\le\overline{\sigma}$, denote by  $\Sigma(\underline{\sigma},\overline{\sigma})$ the family   of all
predictable sequences with respect to $(\mathcal{F}_{i})_{i=0}^{\infty}$ that always take value in $[\underline{\sigma},\overline{\sigma}]$.   For any $\{\sigma_{i}\}_{i=1}^{\infty}\in \Sigma(\underline{\sigma},\overline{\sigma})$, define $X_{i}=\sigma_{i}\varepsilon_{i}+\mu$ and $\bar{X}_{n}=(X_{1}+\ldots,X_{n})/n$. \quad$\Box$
\medskip

Suppose that the observed data sequence $X_{1},\ldots,X_{n}$ results from the above data generation process, and we are interested in the following significance test about the common mean $\mu$:
\begin{equation}\label{eq:1}
\text{(I)} \quad H_{0}:\mu\leq\mu_{0}  \quad \quad \text{versus} \quad \quad H_{1}:\mu>\mu_{0},
\end{equation}
where $\mu_{0}$ is a preassigned   constant.

\begin{remark}
Two closely related one-sample tests, numbered as {\em (II)} and {\em (III)}, are relegated to Appendix \ref{sec:B} for the clarity of the presentation.
\end{remark}

If $X_{1},\ldots,X_{n}$ are independent under the traditional probability theory $(\Omega,{\mathcal F},\bbP)$, that is, the \emph{linear expectation theory}, despite the fact that they are heteroscedastic with however uniformly bounded variances, we have, according to the classical central limit theorem and for   sufficiently large $n$:
\begin{equation}
    \label{eq:student}
\frac{\sqrt{n} (\bar{X}_{n}-\mu)}{S_{n}}~~\dot{\sim}~~N(0,1),\quad \text{where} \quad S_{n}^{2}=\frac{1}{n-1} \sum_{i=1}^{n}\left(X_{i}-\bar{X}_{n}\right)^{2}.
\end{equation}
Thus following this traditional method,   the rejection region is:
\begin{equation}\label{2}
W_{\tI}=\{\sqrt{n} (\bar{X}_{n}-\mu_{0})>S_{n}\Phi^{-1}(1-\alpha)\},
\end{equation}
where $\Phi(\cdot)$ is the cumulative distribution function of the standard normal distribution.

However, \cite{peng2020hypothesis} pointed out that by violating the independence assumption and carefully manipulating the sequence $\{\sigma_{i}\}_{i=1}^{n}$, the experimenter is able to increase the type I error rate of the procedure. In other words, the traditional rejection region $W_{\tI}$  cannot control the type I error rate when the experimenter intentionally manipulates the data. Under the sublinear expectation theory, we assume that the data has distribution uncertainty and by considering a worst scenario, we can protect the test procedure against such data cheating by the experimenter.

\subsection{The significance test under variance uncertainty with   {\em known} variance bounds   $\{\underline{\sigma}^{2},\overline{\sigma}^{2}\}$}

We first establish a central limit theorem under the sublinear expectation, which lays the foundation for our robust test.
\begin{theorem}\label{thm:1}
Let $\{X_{i}\}_{i=1}^{n}$ be given by the  data generating process [*] above. Then for any Lipschitz function $\varphi$,
\begin{equation}\label{eq:3}
\lim _{n \rightarrow \infty} \sup _{\left\{\sigma_{i}\right\} \in \Sigma(\underline{\sigma}, \overline{\sigma})} \bbE [\varphi(\sqrt{n}(\bar{X}_{n}-\mu))]=u(1,0 ; \varphi),
\end{equation}
where $\{u(t,x; \varphi):(t,x)\in [0,\infty)\times\mathbb{R}\}$ is the unique viscosity solution to the Cauchy problem,
\begin{equation}\label{eq:4}
u_t=\frac{1}{2}\left(\bar{\sigma}^2\left(u_{x x}\right)^{+}-\underline{\sigma}^2\left(u_{x x}\right)^{-}\right), \quad u(0, x)=\varphi(x).
\end{equation}
In the above expression, $u_t=\partial u/\partial t$, $u_{xx}=\partial^{2} u/\partial x^{2}$, and $a^+$ and $a^-$   denote the positive and negative parts of $a$, respectively.
\end{theorem}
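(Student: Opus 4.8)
The candidate limit $u(1,0;\varphi)$ is the value at $(1,0)$ of the solution to the Cauchy problem \eqref{eq:4}, whose generator is the convex, piecewise-linear function $G(a):=\tfrac12(\overline{\sigma}^2 a^+-\underline{\sigma}^2 a^-)=\sup_{\sigma\in[\underline{\sigma},\overline{\sigma}]}\tfrac12\sigma^2 a$. Since $\underline{\sigma}>0$, equation \eqref{eq:4} is uniformly parabolic, and my plan is to compare the discrete functional $\bbE[\varphi(S_n)]$, where $S_n=\sqrt{n}(\bar X_n-\mu)=\tfrac1{\sqrt n}\sum_{i=1}^n\sigma_i\varepsilon_i$, with $u$ through a telescoping-plus-Taylor scheme. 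First I would reduce to smooth $\varphi$: mollifying into $\varphi_\delta\in C_b^\infty$ changes the left-hand side of \eqref{eq:3} by at most $\|\varphi-\varphi_\delta\|_\infty$ (monotonicity of $\bbE$) and the right-hand side by the same amount (contraction/comparison for \eqref{eq:4}), so it suffices to treat smooth data, fix $\delta$, send $n\to\infty$, and let $\delta\to0$ last.

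Next I would introduce the grid $t_k=k/n$ and the partial sums $S_k=\tfrac1{\sqrt n}\sum_{i\le k}\sigma_i\varepsilon_i$, so $S_0=0$ and, since $u(0,\cdot)=\varphi$,
\begin{equation*}
\bbE[\varphi(S_n)]-u(1,0)=\sum_{k=0}^{n-1}\bbE\!\left[u(1-t_{k+1},S_{k+1})-u(1-t_k,S_k)\right].
\end{equation*}
Writing $\xi_{k+1}=\tfrac1{\sqrt n}\sigma_{k+1}\varepsilon_{k+1}=S_{k+1}-S_k$ and Taylor expanding each summand to first order in time and second order in space about $(1-t_k,S_k)$, the leading contribution is $-\tfrac1n u_t+u_x\,\xi_{k+1}+\tfrac12 u_{xx}\,\xi_{k+1}^2$. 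Because $\sigma_{k+1}$ is $\mathcal F_k$-measurable (predictability) while $\varepsilon_{k+1}$ is independent of $\mathcal F_k$ with $\bbE[\varepsilon_{k+1}]=0$ and $\bbE[\varepsilon_{k+1}^2]=1$, conditioning on $\mathcal F_k$ annihilates the $u_x\,\xi_{k+1}$ term and replaces $\xi_{k+1}^2$ by $\tfrac1n\sigma_{k+1}^2$, leaving the conditional drift $\tfrac1n\big(-u_t+\tfrac12\sigma_{k+1}^2 u_{xx}\big)(1-t_k,S_k)$ plus a remainder.

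The heart of the argument is the pointwise bound $\tfrac12\sigma^2 a\le G(a)$ for every $\sigma\in[\underline{\sigma},\overline{\sigma}]$ and every $a\in\mathbb R$, with equality at $\sigma=\overline{\sigma}$ when $a\ge0$ and at $\sigma=\underline{\sigma}$ when $a<0$. Since $u$ solves $u_t=G(u_{xx})$, the conditional drift is $\le0$ for every admissible volatility sequence, whence $\sup_{\{\sigma_i\}}\bbE[\varphi(S_n)]\le u(1,0)+(\text{remainder})$. For the matching lower bound I would use the feedback $\sigma^\ast_{k+1}=\overline{\sigma}\,\mathbbm1\{u_{xx}(1-t_k,S_k)\ge0\}+\underline{\sigma}\,\mathbbm1\{u_{xx}(1-t_k,S_k)<0\}$, which is $\mathcal F_k$-measurable and hence lies in $\Sigma(\underline{\sigma},\overline{\sigma})$; with this choice the drift vanishes identically, giving $\sup_{\{\sigma_i\}}\bbE[\varphi(S_n)]\ge u(1,0)-(\text{remainder})$. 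Combining the two reduces everything to showing the accumulated remainder is $o(1)$, uniformly in the scenario.

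Controlling that remainder is where the real work lies. The second-order spatial remainder of a summand is $\tfrac12[u_{xx}(1-t_k,\theta_k)-u_{xx}(1-t_k,S_k)]\,\xi_{k+1}^2$ for an intermediate $\theta_k$; if $u_{xx}$ is $\alpha$-Hölder in $x$ this is $O(|\xi_{k+1}|^{2+\alpha})$, whose conditional expectation is $\lesssim n^{-(1+\alpha/2)}\overline{\sigma}^{2+\alpha}\,\bbE|\varepsilon_1|^{2+\alpha}$, so the sum over the $n$ steps is $O(n^{-\alpha/2})\to0$ — and this is precisely where the hypothesis $\bbE|\varepsilon_i|^3<\infty$ is consumed, as it yields $\bbE|\varepsilon_1|^{2+\alpha}<\infty$ for $\alpha\le1$. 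The time-remainder $O(n^{-2})$ and mixed term $O(n^{-3/2})$ per step are likewise negligible after summation. I expect the genuine obstacle to be securing uniform-in-$x$, uniform-in-$t$ control of $u_{xx}$ and of its Hölder modulus: since $G$ has a kink at the origin, $u$ is not classically smooth up to $t=0$ and interior bounds degenerate as $t\downarrow0$. I would resolve this by exploiting uniform parabolicity ($\underline{\sigma}>0$) together with the convexity of $G$, which yield Evans–Krylov interior $C^{2,\alpha}$ estimates, then shifting the initial time by a small amount and using the global Lipschitz bound on $\varphi$ to control growth in $x$; sending the time-shift and the mollification parameter to zero after $n\to\infty$ then delivers $\sup_{\{\sigma_i\}}\bbE[\varphi(S_n)]\to u(1,0)$.
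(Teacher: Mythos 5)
Your proposal is correct in outline, but the relevant point of comparison is subtle: the paper itself contains \emph{no} proof of Theorem~\ref{thm:1} — it simply defers to \cite{peng2008new}, \cite{rokhlin2015central} and \citet[Theorem 4.1]{fang2019limit}. Measured against those references, your argument is essentially a reconstruction of the standard proof given there: the telescoping of $\bbE[\varphi(S_n)]-u(1,0)$ along the grid $t_k=k/n$; the Taylor expansion followed by conditioning on $\mathcal{F}_k$ (this is exactly where predictability of $\{\sigma_i\}$ and the independence and moments of $\varepsilon_{k+1}$ enter); the elementary identity $G(a)=\sup_{\sigma\in[\underline{\sigma},\overline{\sigma}]}\tfrac12\sigma^2a$, which gives a nonpositive drift for every admissible strategy (upper bound) and a vanishing drift for the feedback strategy $\sigma^{*}_{k+1}=\overline{\sigma}\,\mathbbm{1}\{u_{xx}\ge 0\}+\underline{\sigma}\,\mathbbm{1}\{u_{xx}<0\}$ (lower bound); and remainder control through uniform parabolicity and interior regularity. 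Note that your feedback strategy is precisely the optimal p-hacking strategy that the paper later invokes in the proof of Theorem~\ref{thm:3}, so your lower-bound construction doubles as a derivation of that result. Two technical refinements, neither of which is a gap: (i) for the per-step time remainder you should not rely on a bound for $u_{tt}$, which Evans--Krylov theory does not directly provide; the interior estimate $u\in C^{1+\alpha/2,2+\alpha}_{\mathrm{loc}}$, i.e.\ H\"older continuity of $u_t$ in time, suffices and yields a per-step error $O(n^{-1-\alpha/2})$ that sums to $O(n^{-\alpha/2})$, matching your spatial remainder; (ii) the initial mollification of $\varphi$ is redundant, since once you shift time by $\delta$ and telescope with $u(\cdot+\delta,\cdot)$ the effective initial datum is $u(\delta,\cdot)$, which is already smooth with quantitative bounds, and the Lipschitz estimate $\|u(\delta,\cdot)-\varphi\|_\infty\le C\overline{\sigma}\sqrt{\delta}$ together with $|u(1+\delta,0)-u(1,0)|\le C\overline{\sigma}\sqrt{\delta}$ closes the argument after letting $n\to\infty$ and then $\delta\to 0$. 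You also correctly identify the precise place where the hypothesis $\bbE|\varepsilon_i|^3<\infty$ is consumed.
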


For a proof of the theorem, see \cite{peng2008new}, \cite{rokhlin2015central}, or \citet[Theorem 4.1]{fang2019limit}. The theorem remains valid for any Borel-measurable indicator function $\varphi$, see \cite{peng2019nonlinear} and \cite{peng2023improving}.

Using the language of sublinear expectation, Theorem \ref{thm:1} indicates that $\sqrt{n}(\bar{X}_{n}-\mu)$ asymptotically obeys the G-normal distribution $\mathcal{N}(0,[\underline{\sigma}^{2},\overline{\sigma}^{2}])$, which reduces to the normal distribution $\mathcal{N}(0,\sigma^{2})$ when $\underline{\sigma}=\overline{\sigma}=\sigma$, that is without variance uncertainty.

\medskip
Consider the significance test \eqref{eq:1} with a known variance interval $[\underline{\sigma}^{2},\overline{\sigma}^{2}]$.  Under the null and when  $\mu=\mu_{0}$, according to Theorem \ref{thm:1}, we can calculate the asymptotic maximum false rejection probability function under the variance uncertainty as follows:
\begin{equation}\label{eq:5}
\begin{split}
p_{1}(c ; \underline{\sigma}, \overline{\sigma}):&=\lim _{n \rightarrow \infty} \sup _{\left\{\sigma_{i}\right\} \in \Sigma(\underline{\sigma}, \overline{\sigma})} \bbE \left[\mathbbm{1}\left(\sqrt{n} (\bar{X}_{n}-\mu_{0})>c\right)\right] \\
 &=\frac{2}{\overline{\sigma}+\underline{\sigma}} \int_{c}^{\infty}\{\phi(z / \overline{\sigma}) \mathbbm{1}(z \geq 0)+\phi(z / \underline{\sigma}) \mathbbm{1}(z<0)\} d z,
\end{split}
\end{equation}
where $\phi(\cdot)$ is the density function of the standard normal distribution.

It is clear that for $0<\underline{\sigma}\leq\overline{\sigma}<\infty$, $p_{1}(c ; \underline{\sigma}, \overline{\sigma})$ is a strictly decreasing function of $c$ with
$$
\lim _{c \rightarrow -\infty}p_{1}(c ; \underline{\sigma}, \overline{\sigma})=1 \qquad\text{and}\qquad \lim _{c \rightarrow \infty}p_{1}(c ; \underline{\sigma}, \overline{\sigma})=0.
$$
Therefore, $c\mapsto p_{1}(c ; \underline{\sigma}, \overline{\sigma})$ is a proper tail probability function.

We now use this asymptotic maximum false rejection probability function to define the critical value of our robust test under variance uncertainty. For a given significance level $\alpha\ (0<\alpha<0.5)$, solving the equality
$$  p_{1}(c ; \underline{\sigma}, \overline{\sigma})=\alpha
~~\Longleftrightarrow~~
\frac{2}{\overline{\sigma}+\underline{\sigma}} \int_{c}^{\infty}\phi(z / \overline{\sigma}) d z=\alpha,
$$
we find the solution
\begin{equation}
c_1=\overline{\sigma}\Phi^{-1}\left(1-\frac{\alpha(\overline{\sigma}+\underline{\sigma})}
{2\overline{\sigma}}\right).
\label{eq:c1}
\end{equation}
The rejection region of our test is finally defined as:
\begin{equation}\label{eq:6}
W_{\tI}^{G}=\{\sqrt{n} (\bar{X}_{n}-\mu_{0})>c_{1}
\}.
\end{equation}

\medskip
A few key discussions are necessary.

\begin{enumerate}
\item In case of no variance uncertainty, that is when $\underline{\sigma}=\overline{\sigma}=\sigma$, we have
$c_1= \sigma\Phi^{-1}(1-\alpha)$, and  the robust rejection region~\eqref{eq:6} becomes
\begin{equation}\label{eq:classical}
W_{\tI}^{G}=\{\sqrt{n} (\bar{X}_{n}-\mu_{0})>\sigma\Phi^{-1}(1-\alpha)
\}.
\end{equation}
We recover the classical significance test procedure with a common and known variance $\sigma^2$.

\item Under the null hypothesis $H_{0}$  with $\mu<\mu_{0}$,  we have for  large enough $n$,
\begin{equation}\nonumber
\begin{split}
&\sup _{\left\{\sigma_{i}\right\} \in \Sigma(\underline{\sigma}, \overline{\sigma})} \bbE \left[\mathbbm{1}\left(\sqrt{n} (\bar{X}_{n}-\mu_{0})>c_{1}\right)\right] \\
= &\sup _{\left\{\sigma_{i}\right\} \in \Sigma(\underline{\sigma}, \overline{\sigma})}
\bbE \left[\mathbbm{1}\left(\sqrt{n} (\bar{X}_{n}-\mu)>c_{1}+\sqrt{n}(\mu_{0}-\mu)\right)\right] \\
 \approx&\frac{2}{\overline{\sigma}+\underline{\sigma}} \int_{c_{1}+\sqrt{n}(\mu_{0}-\mu)}^{\infty}\{\phi(z / \overline{\sigma}) \mathbbm{1}(z \geq 0)+\phi(z / \underline{\sigma}) \mathbbm{1}(z<0)\} d z <\alpha.
\end{split}
\end{equation}
Moreover, by taking a sequence $\mu=\mu_{n}<\mu_{0} \text{ such that }\sqrt{n} (\mu_{0}-\mu)\rightarrow0,$ we see that
$$
\lim _{n \rightarrow \infty}\sup _{\mu<\mu_{0}}\sup _{\left\{\sigma_{i}\right\} \in \Sigma(\underline{\sigma}, \overline{\sigma})} \bbE \left[\mathbbm{1}\left(\sqrt{n} (\bar{X}_{n}-\mu_{0})>c_{1}\right)\right]=\alpha.
$$
Therefore, the rejection region $W_{\tI}^{G}$ in \eqref{eq:6} is also a rejection region for testing the null hypothesis $H_{0}:\mu\leq\mu_{0}$ under the considered variance uncertainty.

\item {\bf Power function}: under $H_1$ with  $\mu>\mu_{0}$, the power function of the robust test is
\begin{equation}\nonumber
\begin{split}
g(n,\mu)&=\inf _{\left\{\sigma_{i}\right\} \in \Sigma(\underline{\sigma}, \overline{\sigma})} \bbE \left[\mathbbm{1}\left(\sqrt{n} (\bar{X}_{n}-\mu_{0})>c_{1}\right)\right]\\
&=\inf _{\left\{\sigma_{i}\right\} \in \Sigma(\underline{\sigma}, \overline{\sigma})} \bbP \left(\sqrt{n} (\bar{X}_{n}-\mu_{0})>c_{1}\right)\\
&=1-\sup _{\left\{\sigma_{i}\right\} \in \Sigma(\underline{\sigma}, \overline{\sigma})} \bbP \left(\sqrt{n} (\bar{X}_{n}-\mu_{0})\leq c_{1}\right)\\
&=1-\sup _{\left\{\sigma_{i}\right\} \in \Sigma(\underline{\sigma}, \overline{\sigma})} \bbP \left(\sqrt{n} (\bar{X}_{n}-\mu)\leq c_{1}-\sqrt{n} (\mu-\mu_{0})\right),\\
& \simeq 1 - \frac{2}{\overline{\sigma}+\underline{\sigma}} \int_{-\infty}^{c_{1}-\sqrt{n} (\mu-\mu_{0})}\{\phi(z / \overline{\sigma}) \mathbbm{1}(z \leq 0)+\phi(z / \underline{\sigma}) \mathbbm{1}(z>0)\} d z, \\
& \longrightarrow 1 - 0 = 1, \qquad   \text{as $n\to\infty$.}
\end{split}
\end{equation}
Therefore, as the sample size grows, the power function tends to 1.
\end{enumerate}

\begin{remark} The above power calculation also shows that under a sequence of   local alternatives of the form:
\[ \mu=\mu_n >\mu_0, \quad \mu_n-\mu_0 \to 0,
\quad  \sqrt{n} (\mu_n-\mu_{0}) \to \infty,
\]
the power function $g(n,\mu_n)$ also tends to 1 as $n\to\infty$.
\end{remark}

\subsection{The significance test under variance uncertainty with   {\em unknown} variance bounds   $\{\underline{\sigma}^{2},\overline{\sigma}^{2}\}$}

When the two edges  $\{\underline{\sigma}^{2},\overline{\sigma}^{2}\}$ of the variance interval $[\underline{\sigma}^{2},\overline{\sigma}^{2}]$ are  unknown, we need  to estimate them from the data. Clearly, further information on the data generating process is needed for otherwise, it is hard to find good estimates for the two   variance bounds governing the variability of the data sequence. In this paper, we assume that the data sequence is composed with $k$ samples, a framework often assumed in the literature of distribution uncertainty as in \cite{peng2023improving}.

The following lemma is elementary and evaluates the mean and variance of a mixed group from two samples.
\begin{lemma}\label{lem:1}
If two samples have the same mean $\mu$ and different variances $(\sigma_{1}^{2},\sigma_{2}^{2})$, then the
mean of the pooled sample has average $\mu$, and its sample variance has an average in the interval $[\sigma_{1}^{2}\wedge\sigma_{2}^{2},\sigma_{1}^{2}\vee\sigma_{2}^{2}]$.
\end{lemma}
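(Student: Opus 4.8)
The plan is to treat the pooled sample as the concatenation of two independent samples, of sizes $n_1$ and $n_2$ say, drawn from populations with the common mean $\mu$ and variances $\sigma_1^2$ and $\sigma_2^2$, and then to compute the expectations (the ``averages'') of the pooled sample mean and the pooled sample variance directly. Writing $N=n_1+n_2$ and $\bar X=(n_1\bar X_1+n_2\bar X_2)/N$ for the pooled mean, the assertion about the mean is immediate from linearity: $\bbE[\bar X]=(n_1\mu+n_2\mu)/N=\mu$. So the content of the lemma is entirely in the variance claim.

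For the variance, the key device is the classical ANOVA-type decomposition of the total sum of squares about the pooled mean. First I would split $\sum (X_i-\bar X)^2$ over the two groups, and within each group use the identity $\sum_{\text{group }j}(X_i-\bar X)^2=\sum_{\text{group }j}(X_i-\bar X_j)^2+n_j(\bar X_j-\bar X)^2$. Substituting $\bar X_1-\bar X=n_2(\bar X_1-\bar X_2)/N$ and $\bar X_2-\bar X=-\,n_1(\bar X_1-\bar X_2)/N$ collapses the two between-group terms into the single cross term $\tfrac{n_1 n_2}{N}(\bar X_1-\bar X_2)^2$, so the total sum of squares equals the two within-group sums of squares plus this one term.

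Next I would take expectations term by term. Each within-group sum of squares has expectation $(n_j-1)\sigma_j^2$, while independence across the two samples together with $\bbE[\bar X_1-\bar X_2]=0$ gives $\bbE[(\bar X_1-\bar X_2)^2]=\mathrm{Var}(\bar X_1-\bar X_2)=\sigma_1^2/n_1+\sigma_2^2/n_2$. Dividing the total by $N-1$ then expresses $\bbE[S^2]$ as an explicit linear combination $a\sigma_1^2+b\sigma_2^2$ with
$$a=\frac{(n_1-1)+n_2/N}{N-1},\qquad b=\frac{(n_2-1)+n_1/N}{N-1}.$$
I would close the argument with the two bookkeeping checks that make this a genuine convex combination: both $a,b\ge 0$ (clear for $n_1,n_2\ge 1$), and $a+b=1$, since the numerators sum to $(n_1+n_2-2)+N/N=(N-2)+1=N-1$. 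A convex combination of $\sigma_1^2$ and $\sigma_2^2$ necessarily lies in $[\sigma_1^2\wedge\sigma_2^2,\ \sigma_1^2\vee\sigma_2^2]$, which is exactly the claim.

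There is no genuine obstacle here; the lemma is elementary, and the only points needing a little care are the algebraic simplification of the between-group term and the verification that the resulting weights are nonnegative and sum to one. The sole modeling choice worth flagging is that ``variance $\sigma_j^2$'' is read as the per-observation (population) variance within sample $j$, with independence across the two samples; under that reading the computation is exact for every pair of sizes $(n_1,n_2)$ and, in particular, does not require the two samples to be of equal size.
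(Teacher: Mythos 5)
Your proof is correct and follows essentially the same route as the paper's: both compute the expectation of the pooled sample variance directly and conclude it lies in the interval because it is a convex combination of $\sigma_1^2$ and $\sigma_2^2$; your ANOVA-type decomposition merely supplies the ``elementary calculation'' that the paper omits. In fact your weights coincide with the paper's after simplification, since $n_1-1+n_2/N=n_1(N-1)/N$ gives $a=n_1/N$ and $b=n_2/N$, recovering the paper's stated formula $\bbE(S^{2})=(n_1\sigma_1^{2}+n_2\sigma_2^{2})/N$.
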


\begin{proof}
Let $X_{1},\ldots,X_{n_{1}}$ be the first sample with $\bbE(X_{i})=\mu$ and $Var(X_{i})=\sigma_{1}^{2}\ (i=1,\ldots,n_{1})$, and   $Y_{1},\ldots,Y_{n_{2}}$ be the second sample with $\bbE(Y_{j})=\mu$ and $Var(Y_{j})=\sigma_{2}^{2}\ (j=1,\ldots,n_{2})$, the two samples being independent.

Let $\bar X = n_1^{-1}\sum_iX_i$ and $\bar Y = n_2^{-1}\sum_jY_j$ be the two sample means.  The sample mean of the pooled sample  is
$$
\bar{Z}=\frac{1}{n_{1}+n_{2}} \left(\sum_{i=1}^{n_{1}}X_{i}+\sum_{j=1}^{n_{2}}Y_{j}\right)
=\frac{n_1}{n_1+n_2} \bar X+\frac{n_2}{n_1+n_2} \bar Y.
$$
Clearly $\bbE(\bar{Z})=\mu$.
The pooled sample variance is
$$
S^{2}=\frac{1}{n_{1}+n_{2}-1}\left[\sum_{i=1}^{n_{1}}(X_{i}-\bar{Z})^{2}+\sum_{j=1}^{n_{2}}(Y_{j}-\bar{Z})^{2}\right].
$$
By elementary calculation, one finds that
\begin{equation*}
\bbE(S^{2}) =\frac{n_{1}\sigma_{1}^{2}+n_{2}\sigma_{2}^{2}}{n_{1}+n_{2}}.
\end{equation*}
Evidently, this average falls in the interval $[\sigma_{1}^{2}\wedge\sigma_{2}^{2},\sigma_{1}^{2}\vee\sigma_{2}^{2}]$, and the proof is complete.
\end{proof}

\begin{remark}
Lemma \ref{lem:1} can be extended to the $k$-sample case: the mean of the pooled sample still has average $\mu$ and the pooled sample variance has an average between   $\min\{\sigma_{1}^{2},\ldots,\sigma_{k}^{2}\}$ and $\max\{\sigma_{1}^{2},\ldots,\sigma_{k}^{2}\}$.
\end{remark}

In the next theorem, we provide optimal unbiased estimators for the bounds $\overline{\sigma}^{2}$ and $\underline{\sigma}^{2}$ when the data is composed of $k$ independent samples.

\begin{theorem}\label{thm:2}
Consider a pooled sample $X_{1},\ldots,X_{n}$ made from $k$ samples of equal length $m$. For $1\leq i\leq k$,  the $i$th sample is denoted as: $X_{(i-1)*m+1},\ldots,X_{i*m}$ which are i.i.d. with mean  $\mu$  and  variance $\sigma_{i}^{2}\in[\underline{\sigma}^{2},\overline{\sigma}^{2}]$. Suppose $\underline{\sigma}^{2}=\min\limits_{1\leq i\leq k} \sigma_{i}^{2}$ and $\overline{\sigma}^{2}=\max\limits_{1\leq i\leq k}\sigma_{i}^{2}$. Consider the $k$ sub-sample means and variances: for $1\leq i \leq k$,
\begin{align*}
\bar{X}_{i}&=\frac{1}{m} \sum_{j=1}^{m}X_{(i-1)*m+j},\\
S_{ni}^{2}&=\frac{1}{m-1} \sum_{j=1}^{m}\left(X_{(i-1)*m+j}-\bar{X}_{i}\right)^{2}.
\end{align*}
 Then when $m\to\infty$,
$$
\hat{\overline{\sigma}}^{2}=\max_{1\leq i\leq k} S_{ni}^{2}
$$
is asymptotically the largest unbiased estimator for the upper variance $\overline{\sigma}^{2}$, and
$$
\hat{\underline{\sigma}}^{2}=\min_{1\leq i\leq k} S_{ni}^{2}
$$
is asymptotically the smallest unbiased estimator for the lower variance $\underline{\sigma}^{2}$.
\end{theorem}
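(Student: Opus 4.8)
The plan is to reduce the statement to elementary properties of the $k$ individual sub-sample variances $S_{ni}^2$, which are mutually independent and each built from an i.i.d.\ block. First I would record the exact finite-sample identity $\bbE[S_{ni}^2]=\sigma_i^2$ for every $m\ge2$, the classical unbiasedness of the sample variance. Next, writing $S_{ni}^2=\frac{m}{m-1}\big(\frac1m\sum_j X_{(i-1)m+j}^2-\bar X_i^2\big)$ and applying the strong law of large numbers to the block of squares and to the block mean, I obtain $S_{ni}^2\to\sigma_i^2$ almost surely as $m\to\infty$; only a finite second moment is needed here, while the assumption $\bbE|\varepsilon_i|^3<\infty$ from the data generating process is held in reserve for the integrability step below.

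Passing to the extremes is then immediate at the level of almost-sure limits: since the maximum (resp.\ minimum) of finitely many almost surely convergent sequences converges to the maximum (resp.\ minimum) of the limits, $\hat{\overline{\sigma}}^{2}=\max_i S_{ni}^2\to\max_i\sigma_i^2=\overline{\sigma}^2$ and $\hat{\underline{\sigma}}^{2}=\min_i S_{ni}^2\to\min_i\sigma_i^2=\underline{\sigma}^2$ almost surely. To upgrade these to convergence of expectations, i.e.\ to asymptotic unbiasedness, I would establish uniform integrability of the family $\{S_{ni}^2:m\ge2\}$. Because $\bbE|\varepsilon_i|^3<\infty$ gives $X_{(i-1)m+j}^2\in L^{3/2}$ uniformly, the block averages of squares, and hence the $S_{ni}^2$ and their finite maxima and minima, are uniformly integrable. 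Combining almost-sure convergence with uniform integrability then yields $\bbE[\hat{\overline{\sigma}}^{2}]\to\overline{\sigma}^2$ and $\bbE[\hat{\underline{\sigma}}^{2}]\to\underline{\sigma}^2$.

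It remains to justify the extremal qualifiers \emph{largest} and \emph{smallest}. Here I would invoke Lemma~\ref{lem:1} and its $k$-sample extension: any estimator obtained by pooling or averaging the sub-samples is unbiased only for a convex combination $\sum_i w_i\sigma_i^2$ with $w_i\ge0$ and $\sum_i w_i=1$, which necessarily lies in $[\underline{\sigma}^2,\overline{\sigma}^2]$. Thus $\overline{\sigma}^2$ is the largest, and $\underline{\sigma}^2$ the smallest, target that an unbiased pooled estimator can aim at. Since $\hat{\overline{\sigma}}^{2}$ and $\hat{\underline{\sigma}}^{2}$ are asymptotically unbiased for exactly these two extreme targets, and since $\max_i S_{ni}^2$ dominates, while $\min_i S_{ni}^2$ is dominated by, every such weighted average pointwise, they are respectively the asymptotically largest and smallest unbiased estimators in this class.

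The main obstacle is the interchange of limit and expectation at the extremes. By Jensen's inequality one has $\bbE[\max_i S_{ni}^2]\ge\overline{\sigma}^2$ and $\bbE[\min_i S_{ni}^2]\le\underline{\sigma}^2$ for every finite $m$, so both estimators are genuinely biased in finite samples and unbiasedness is recovered only in the limit; the whole argument therefore rests on the uniform-integrability estimate, which is exactly where the third-moment hypothesis is consumed. A secondary point needing care is to state precisely the competing class against which the optimality (\emph{largest}/\emph{smallest}) is asserted, so that the extremal claim is unambiguous.
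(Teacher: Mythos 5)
Your first half is sound as classical probability, and it is genuinely different from the paper's route: finite-sample unbiasedness of each $S_{ni}^{2}$, the strong law within each block, stability of $\max$/$\min$ under finitely many almost-sure limits, and uniform integrability extracted from the third-moment assumption do yield $\bbE[\max_{i} S_{ni}^{2}]\to\overline{\sigma}^{2}$ and $\bbE[\min_{i} S_{ni}^{2}]\to\underline{\sigma}^{2}$. The paper instead obtains the distributional behaviour of the $S_{ni}^{2}$ from the law of large numbers \emph{under sublinear expectation}, which says that each $S_{ni}^{2}$ is asymptotically distributed as the maximal distribution $M_{[\underline{\sigma}^{2},\overline{\sigma}^{2}]}$.

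The genuine gap is in the extremal qualifiers, which are the actual content of Theorem \ref{thm:2}. In the paper, ``largest unbiased estimator'' and ``smallest unbiased estimator'' are the precise optimality notions of Theorem 24 of \cite{jin2021optimal}, formulated inside the sublinear framework: once the $S_{ni}^{2}$ are (asymptotically) i.i.d.\ with maximal distribution $M_{[\underline{\sigma}^{2},\overline{\sigma}^{2}]}$, Jin--Peng's theorem states that $\max_i S_{ni}^2$ is unbiased for $\overline{\sigma}^{2}$ \emph{and dominates every unbiased estimator of} $\overline{\sigma}^{2}$, with the dual statement for the minimum. That domination is what ``largest'' means, and it is a phenomenon specific to maximal distributions: sublinear unbiasedness requires $\hatE[T]=\sup_{y\in[\underline{\mu},\overline{\mu}]^{k}}T(y)=\overline{\mu}$ for \emph{every} admissible pair $(\underline{\mu},\overline{\mu})$, and taking $\overline{\mu}=\max_i y_i$, $\underline{\mu}=\min_i y_i$ forces $T(y)\le\max_i y_i$ pointwise. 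Your argument cannot deliver this. You compare $\max_i S_{ni}^{2}$ only against convex combinations $\sum_i w_i S_{ni}^{2}$, but those are unbiased for $\sum_i w_i\sigma_i^{2}$, not for $\overline{\sigma}^{2}$, so they are not in the competing class at all; dominating them proves nothing about dominating the unbiased estimators of $\overline{\sigma}^{2}$. Worse, in your fixed-$\sigma_i^{2}$ classical reading the extremal claim is not even well-posed: $\max_i S_{ni}^{2}+1/m$, or $\max_i S_{ni}^{2}$ plus any mean-zero perturbation, is again asymptotically unbiased for $\overline{\sigma}^{2}$ and strictly larger, so no ``largest asymptotically unbiased estimator'' exists classically. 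The ambiguity you yourself flagged at the end (``state precisely the competing class'') is exactly the missing idea, and resolving it requires the two ingredients your proof replaced: the sublinear law of large numbers giving the maximal limit distribution, and Jin--Peng's optimality theorem for that distribution.
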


\begin{proof}
By the law of large numbers under sublinear expectation, $S_{ni}^{2}$ obeys the maximum distribution $M_{[\underline{\sigma}^{2},\overline{\sigma}^{2}]}$ as $m\rightarrow\infty$. According to Theorem 24 in \cite{jin2021optimal}, if $Z_{1},\ldots,Z_{k}$ is an  i.i.d. sample of size $k$ from a population of maximal distribution $M_{[\underline{\mu},\overline{\mu}]}$ with unknown parameters $\underline{\mu}\leq\overline{\mu}$, where the i.i.d. condition is under the sublinear expectation, then
$$
\hat{\overline{\mu}}=\max\{Z_{1},\ldots,Z_{k}\}
$$
is the largest unbiased estimator for the upper mean $\overline{\mu}$, and
$$
\hat{\underline{\mu}}=\min\{Z_{1},\ldots,Z_{k}\}
$$
is the smallest unbiased estimator for the lower mean $\underline{\mu}$.

Applying this result to the asymptotic  maximum distribution $M_{[\underline{\sigma}^{2},\overline{\sigma}^{2}]}$ establishes the conclusions.
\end{proof}

In practice, the $k$ sub-sample sizes may not be equal, and we do not know the exact change points of the sub-samples. We can still construct optimal estimators for $\overline{\sigma}^{2}$ and $\underline{\sigma}^{2}$ using the {\em tool of  moving blocks}. The data $X_{1},\ldots,X_{n}$
are scanned sequentially into $L=n-m+1$ overlapping moving blocks of a given block length $m$:
$$
\{X_{1},\ldots X_{m}\},\{X_{2},\ldots,X_{m+1}\},\ldots,\{X_{n-m+1},\ldots,X_{n}\}.
$$
For $1\leq l \leq L$, denote the data in the $l$th block by $B_{l}=\{X_{j}\}_{l\leq j\leq l+m-1}$ with sample mean and sample variance:
$$
\bar{X}_{nl}=\frac{1}{m} \sum_{j=l}^{l+m-1}X_{j},
\qquad
\tilde S_{nl}^{2}=\frac{1}{m-1} \sum_{j=l}^{l+m-1}\left(X_{j}-\bar{X}_{nl}\right)^{2}.
$$
Define the {\em moving-block estimators}:
\begin{equation}
    \tilde{\overline{\sigma}}^{2}=\max_{1\leq l\leq L} \tilde S_{nl}^{2},
    \qquad
    \tilde{\underline{\sigma}}^{2}=\min_{1\leq l\leq L} \tilde S_{nl}^{2}.
    \label{eq:mb-estimators}
\end{equation}
\begin{corollary}\label{cor:mb}
Under the conditions of Theorem \ref{thm:2}, we further assume: (i) the sizes of the samples are unequal but greater than the block length $m$; (ii) we do not know the number of the samples, (iii)  we do not know the exact locations of the change points of the sub-samples. Then the moving-block estimators
$\tilde{\overline{\sigma}}^{2}$ and $\tilde{\underline{\sigma}}^{2}$
in ~\eqref{eq:mb-estimators}
are, asymptotically, the largest unbiased estimator for the upper variance $\overline{\sigma}^{2}$, and
the smallest unbiased estimator for the lower variance $\underline{\sigma}^{2}$, respectively.
\end{corollary}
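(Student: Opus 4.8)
The plan is to reduce the moving-block setting to the equal-length, known-change-point setting of Theorem~\ref{thm:2} by sorting the $L=n-m+1$ blocks into two types and controlling each separately. Call a block \emph{pure} if all its $m$ observations belong to a single sub-sample and \emph{mixed} if it straddles a change point. Assumption~(i), that every sub-sample is longer than $m$, has two consequences I would use throughout: a block of length $m$ straddles at most one change point, so every mixed block draws its data from exactly two adjacent sub-samples; and each sub-sample of size $n_i>m$ contains at least one pure block lying entirely within it.

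For the pure blocks the analysis of Theorem~\ref{thm:2} applies verbatim: a pure block inside sub-sample $i$ is an i.i.d. sample of size $m$ with variance $\sigma_i^2$, so by the sublinear law of large numbers its sample variance $\tilde S_{nl}^2$ concentrates at $\sigma_i^2$ and asymptotically obeys $M_{[\underline{\sigma}^2,\overline{\sigma}^2]}$. For the mixed blocks I would invoke Lemma~\ref{lem:1}: since all sub-samples share the common mean $\mu$, a mixed block straddling sub-samples $i$ and $i+1$ is a pooled sample of two equal-mean groups with variances $\sigma_i^2,\sigma_{i+1}^2$, so its expected sample variance is the convex combination $(a\sigma_i^2+b\sigma_{i+1}^2)/(a+b)$, with $a,b>0$ the numbers of points on each side. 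This combination lies in $[\sigma_i^2\wedge\sigma_{i+1}^2,\ \sigma_i^2\vee\sigma_{i+1}^2]\subseteq[\underline{\sigma}^2,\overline{\sigma}^2]$ and, both weights being strictly positive, is strictly below $\overline{\sigma}^2$ unless both constituents equal $\overline{\sigma}^2$, and strictly above $\underline{\sigma}^2$ unless both equal $\underline{\sigma}^2$. Hence, in the limit, a mixed block cannot attain an endpoint of the variance interval unless it is effectively pure.

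Consistency then follows by a sandwich. On the one hand $\tilde{\overline{\sigma}}^2=\max_l\tilde S_{nl}^2$ dominates the sample variance of a pure block lying in a sub-sample that attains $\overline{\sigma}^2$, and that variance tends to $\overline{\sigma}^2$, so $\liminf\tilde{\overline{\sigma}}^2\ge\overline{\sigma}^2$; on the other hand every block has limiting variance at most $\overline{\sigma}^2$, giving $\limsup\tilde{\overline{\sigma}}^2\le\overline{\sigma}^2$. Thus $\tilde{\overline{\sigma}}^2\to\overline{\sigma}^2$, and the symmetric argument yields $\tilde{\underline{\sigma}}^2\to\underline{\sigma}^2$. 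To upgrade consistency to the stated optimality I would pick one pure block from each sub-sample; these $k$ block variances are i.i.d. under the sublinear expectation and asymptotically distributed as $M_{[\underline{\sigma}^2,\overline{\sigma}^2]}$, exactly the situation of Theorem~\ref{thm:2}, so Theorem~24 of \cite{jin2021optimal} identifies their maximum and minimum as the largest and smallest unbiased estimators. Since the mixed blocks stay strictly interior and the remaining pure blocks only repeat the $k$ values $\sigma_1^2,\dots,\sigma_k^2$, the full moving-block extrema agree in the limit with the extrema over these $k$ representatives, transferring optimality to $\tilde{\overline{\sigma}}^2$ and $\tilde{\underline{\sigma}}^2$.

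I expect the main obstacle to be the combination of the boundary (mixed) blocks with the overlap among the growing number $L=n-m+1$ of dependent blocks: one must rule out that a mixed block, or the maximization over all dependent blocks, pushes the estimator past the true endpoints. The first difficulty is resolved precisely by the equal-means hypothesis feeding Lemma~\ref{lem:1}, which keeps every mixed-block variance a convex combination of two in-range values and hence strictly interior. The second requires checking that the $o_p(1)$ fluctuations of the block variances are negligible relative to the gaps between distinct values $\sigma_i^2$; with $k$ fixed and $m\to\infty$ this follows from the sublinear law of large numbers, and it is what makes the $\limsup$ bound in the sandwich legitimate despite the maximum being taken over $L\to\infty$ blocks.
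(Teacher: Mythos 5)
Your proposal is correct and follows essentially the same route as the paper's own (much terser) proof: classify blocks as pure or mixed, use Lemma~\ref{lem:1} to keep mixed-block variances inside $[\underline{\sigma}^{2},\overline{\sigma}^{2}]$, note that assumption (i) guarantees each sub-sample contains a pure block, and then invoke Theorem~\ref{thm:2} (via Theorem~24 of \cite{jin2021optimal}) on the pure blocks. Your additional care about the $L\to\infty$ dependent blocks and the uniformity of the $o_p(1)$ fluctuations is a genuine refinement of a point the paper's proof silently glosses over, but it does not change the underlying argument.
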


\begin{proof}
If one block $B_{l}$ overlaps with two groups of samples, by Lemma \ref{lem:1} the variance of the block is between the variances of these two groups of samples. In addition, for each sample there is a block $B_{l}$ that is completely contained in this sample. Combining with Theorem \ref{thm:2} establishes the result.
\end{proof}
\begin{remark}
The use of moving blocks  to estimate the upper and lower variances $\overline{\sigma}^{2}$ and $\underline{\sigma}^{2}$ was first proposed by \cite{peng2023improving}. However, there was no a rigorous proof for the properties of the estimators as established in  Corollary~\ref{cor:mb}.
\end{remark}
\section{Performance against p-hacking}\label{sec:3}

We now  analyze the robustness of the proposed test in a p-hacking scenario where an adversarial experimenter would manipulate the data to gain certain benefits. Specifically, consider the one-sample significance test about a population mean   $\mu$ which, for example, can be regarded as the efficacy of a new drug that can only be put into the market when $\mu>\mu_{0}$ for some reference value $\mu_0$. If the experimenter can reject the null hypothesis $H_{0}:\mu\leq\mu_{0}$ based on the experimental data, he will gain expected benefits, otherwise he may suffer losses due to the initial investment costs. Note that there is no need for the experimenter to spend effort to manipulate the data when $\mu>\mu_{0}$ while when the data is close to the null hypothesis $\mu\le\mu_{0}$,    a dishonest experimenter could  manipulate the data, through data selection for example,  in order to artificially increase the significance of the drug.  Therefore for the sake of public interest, regulatory authorities need to take necessary measures to counter such data manipulation by dishonest experimenters and ensure that the actual type I error rate does not artificially exceed the nominal significance level $\alpha$.

In Section \ref{sec:3.1}, we present an optimal manipulation strategy that the experimenter will adopt to maximize the probability of $\{\sqrt{n} (\bar{Z}_{n}-\mu_{0})>c\}$ when $\mu=\mu_{0}$, where $c$ is a given constant. Then in Section \ref{sec:3.2}, we compare the defense capability of the traditional test procedure with our robust procedure. Although the nominal significance level is $\alpha$, the experimenter can use an optimal manipulation strategy to select data and increases the actual type I error rate of the traditional rejection region $W_{\tI}$ to a value higher than  $2\alpha\overline{\sigma}/(\underline{\sigma}+\overline{\sigma})$, which is always larger than $\alpha$.  In contrast, under the optimal data selection strategy for p-hacking,  the actual type I error rate of our robust test with rejection region $W_{\tI}^{G}$ will stay close to the nominal significance level $\alpha$.
\subsection{An optimal data manipulation strategy for p-hacking} \label{sec:3.1}
For simplicity, here we only discuss the   case with two sub-samples. Assuming that there are two sub-samples that have different variances but the same mean: the first sample $\{W_{1i}\}_{i=1}^{n_{1}}$ obeys $\mathcal{N}(\mu,\overline{\sigma}^{2})$ and the second sample $\{W_{2i}\}_{i=1}^{n_{2}}$ obeys $\mathcal{N}(\mu,\underline{\sigma}^{2})$. Let $n=n_{1}\wedge n_{2}$.  For a given constant $c$ the  experimenter strategically chooses $Z_i=W_{1i}$, or $Z_i=W_{2i}$ at the $i$th stage to maximize the probability of $\{\sqrt{n} (\bar{Z}_{n}-\mu_{0})>c\}$ when $\mu=\mu_{0}$ (i.e.,  maximize the actual type I error rate). Until the final stage $n$, the experimenter follows a sequential strategy $\theta=(\sigma_{1},\ldots,\sigma_{n})\in\{\underline{\sigma},\overline{\sigma}\}^n$, where $\sigma_{i}=\overline{\sigma}$ indicates $Z_i=W_{1i}$,  and $\sigma_{i}=\underline{\sigma}$ indicates $Z_i=W_{2i}$. In other words,
$$
Z_i=\left\{\begin{array}{l}
W_{1i}, \quad \text { if } \sigma_{i}=\overline{\sigma}, \\[1mm]
W_{2i}, \quad \text { if } \sigma_{i}=\underline{\sigma},
\end{array}\right.
$$
where the $\{\sigma_{i}\}_{i=1}^{n}$ are practically identified using historical information.

In the next theorem, we present an optimal manipulation strategy $\theta^*$ the experimenter can  take to maximize the probability of rejecting the null hypothesis when the data is actually generated under the null hypothesis.

\begin{theorem}\label{thm:3}
(Optimal manipulation strategy $\theta^*$) For a given constant $c$, an experimenter can construct the asymptotically optimal strategy $\theta^*$ to maximize the probability of $\{\sqrt{n} (\bar{Z}_{n}-\mu_{0})>c\}$ when $\mu=\mu_{0}$ (i.e., achieving the supremum in \eqref{eq:5}) as follows:  choose an arbitrary $\sigma_{1}$ and for $i\geq2$, choose
\begin{equation}\label{7}
\begin{array}{rlrl}
\sigma_{i}=\overline{\sigma}, & & \text { if }\
   \frac1{\sqrt{n}}\xi_{i-1}  \leq c, \\
\sigma_{i}=\underline{\sigma}, & & \text { if }\   \frac1{\sqrt{n}}\xi_{i-1} >c, \\
\end{array}
\end{equation}
where $\xi_i =\sum_{\ell=1}^i(Z_{\ell}-\mu_{0})$.
\end{theorem}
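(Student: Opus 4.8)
The plan is to read the experimenter's problem as a discrete stochastic control problem and to identify its optimal feedback law through the associated Hamilton--Jacobi--Bellman (HJB) equation, which turns out to be precisely the G-heat equation~\eqref{eq:4}. Writing $\xi_i=\sum_{\ell=1}^i \sigma_\ell\varepsilon_\ell$ (recall $Z_\ell-\mu_0=\sigma_\ell\varepsilon_\ell$ under $\mu=\mu_0$), the quantity to be maximised is $\bbP(\xi_n/\sqrt n>c)$ over all adaptive choices $\sigma_i\in\{\underline{\sigma},\overline{\sigma}\}$. I would introduce the value function
\begin{equation*}
V_n(i,x)=\sup_{\sigma_{i+1},\dots,\sigma_n}\bbE\!\left[\mathbbm{1}\!\left(\tfrac{1}{\sqrt n}\xi_n>c\right)\;\Big|\;\tfrac{1}{\sqrt n}\xi_i=x\right],
\end{equation*}
which obeys the Bellman recursion $V_n(i,x)=\max_{\sigma\in\{\underline{\sigma},\overline{\sigma}\}}\bbE\,V_n\!\big(i+1,\,x+\sigma\varepsilon_{i+1}/\sqrt n\big)$. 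A second-order Taylor expansion of this recursion, using $\bbE(\varepsilon)=0$, $\bbE(\varepsilon^2)=1$ and $\bbE(|\varepsilon|^3)<\infty$, shows that the rescaled value function converges to $v(t,x)=u(1-t,x)$, where $u$ solves~\eqref{eq:4} with $u(0,\cdot)=\mathbbm{1}(\cdot>c)$; in particular $V_n(0,0)\to u(1,0)=p_{1}(c;\underline{\sigma},\overline{\sigma})$ by Theorem~\ref{thm:1} and~\eqref{eq:5}. Since the maximiser of $\sigma^2 v_{xx}$ over $[\underline{\sigma},\overline{\sigma}]$ is always attained at an endpoint, the optimal feedback is $\sigma=\overline{\sigma}$ when $v_{xx}\ge 0$ and $\sigma=\underline{\sigma}$ when $v_{xx}<0$, exactly reproducing the structure $\overline{\sigma}^2(u_{xx})^+-\underline{\sigma}^2(u_{xx})^-$ of~\eqref{eq:4}. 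Thus everything reduces to locating the sign change of $u_{xx}$.

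To pin down that sign change I would exploit the scaling invariance of~\eqref{eq:4}. Shifting to $\tilde x=x-c$ so that $u(0,\tilde x)=\mathbbm{1}(\tilde x>0)$, the problem is self-similar and admits a profile solution $u(t,\tilde x)=F(\tilde x/\sqrt t)$. Substituting into~\eqref{eq:4} gives the ordinary differential equation
\begin{equation*}
-\eta\,F'(\eta)=\overline{\sigma}^2\big(F''(\eta)\big)^+-\underline{\sigma}^2\big(F''(\eta)\big)^-,\qquad F(-\infty)=0,\ F(+\infty)=1 .
\end{equation*}
Because $u$ is monotone in $x$ we have $F'>0$, so the right-hand side has the sign of $-\eta$; consequently $F''>0$ for $\eta<0$ and $F''<0$ for $\eta>0$, that is $u_{xx}>0$ for $x<c$ and $u_{xx}<0$ for $x>c$, with the switch at $x=c$ for every $t>0$. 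Feeding this into the Bellman maximiser yields $\sigma_i=\overline{\sigma}$ when $\xi_{i-1}/\sqrt n\le c$ and $\sigma_i=\underline{\sigma}$ when $\xi_{i-1}/\sqrt n>c$, which is exactly the rule~\eqref{7}, the initial choice $\sigma_1$ being irrelevant in the limit. As a consistency check, solving the ODE on each half-line gives $F'(\eta)\propto e^{-\eta^2/2\overline{\sigma}^2}$ for $\eta<0$ and $F'(\eta)\propto e^{-\eta^2/2\underline{\sigma}^2}$ for $\eta>0$, which reproduces the density appearing in~\eqref{eq:5}.

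To convert this into a rigorous optimality statement I would argue by verification. The upper bound $\limsup_n\bbE[\mathbbm{1}(\xi_n/\sqrt n>c)]\le u(1,0)=p_{1}(c;\underline{\sigma},\overline{\sigma})$ holds for every admissible strategy, directly from~\eqref{eq:5}. For the matching lower bound, run the process under the feedback law $\theta^*$ and set $M_i=v(i/n,\xi_i/\sqrt n)$; since $\theta^*$ selects the Bellman maximiser at every state, a Taylor expansion shows $M_i$ is a martingale up to a cumulative error of order $O(n^{-1/2})$, whence $\bbE[\mathbbm{1}(\xi_n/\sqrt n>c)]=\bbE(M_n)\to M_0=u(1,0)$. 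The two bounds together show that $\theta^*$ attains the supremum in~\eqref{eq:5} asymptotically.

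The step I expect to be the main obstacle is the regularity of $u$ at the free boundary $x=c$ where the control switches. The profile ODE shows $F\in C^2$ but $F'''$ jumps at $\eta=0$ (the two half-line formulae have different curvatures because $\overline{\sigma}\ne\underline{\sigma}$), so the third-order remainder in the verification expansion cannot be bounded uniformly across $x=c$. I would resolve this either by sandwiching $\mathbbm{1}(\cdot>c)$ between Lipschitz functions and passing to the limit using monotone dependence of~\eqref{eq:4} on the initial data, or by a localisation argument bounding the occupation time of the discrete process in a shrinking neighbourhood of $c$; the boundedness of $u_{xx}$ away from $x=c$, combined with $\bbE(|\varepsilon|^3)<\infty$, then forces the discretisation error to vanish.
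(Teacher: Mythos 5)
Your proposal is correct and, at its core, rests on the same two facts as the paper's proof: the asymptotically optimal strategy is the feedback law dictated by the sign of $u_{xx}$ for the G-heat equation \eqref{eq:4} with data $\mathbbm{1}(\cdot>c)$, and that sign switches exactly at $x=c$. The difference is in how each fact is established. The paper imports the first fact wholesale from the Appendix of \cite{peng2020hypothesis} (its rule \eqref{8} is quoted, not proved) and obtains the second by differentiating the known closed form $u(t,x)=f\{(x-c)/\sqrt{t}\}$, reading off the sign of $f_{yy}$ directly. You instead re-derive the feedback structure from scratch via the Bellman recursion and an approximate-martingale verification argument, and you locate the sign change qualitatively: the parabolic self-similarity of \eqref{eq:4} with scale-invariant data reduces the problem to the ODE $-\eta F'=\overline{\sigma}^2(F'')^{+}-\underline{\sigma}^2(F'')^{-}$, and since $F'>0$ (comparison/monotone data) the right side has the sign of $F''$, forcing $\operatorname{sign}(F'')=\operatorname{sign}(-\eta)$ — the closed form enters only as a consistency check. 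What your route buys is self-containedness and a structural explanation of why the switching curve is the vertical line $x=c$, an argument that would survive for any monotone payoff without an explicit solution; what it costs is exactly the technical burden you flag: the verification expansion must cope with the discontinuous terminal data and the jump of $u_{xxx}$ across the free boundary, which the paper never confronts because the citation absorbs it. Your proposed fixes (sandwiching $\mathbbm{1}(\cdot>c)$ between Lipschitz functions and using monotone dependence on the data, or controlling occupation time near $x=c$) are the standard and appropriate ones — indeed the Lipschitz sandwich is precisely how Theorem~\ref{thm:1} is extended from Lipschitz $\varphi$ to indicators in the literature the paper cites — so the gap is one of unexecuted but routine detail, not of a missing idea.
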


\begin{proof}
By the Appendix of \cite{peng2020hypothesis}, the asymptotically optimal strategy that attains the supremum in \eqref{eq:3} is, for $i=2,\ldots,n$,
\begin{equation}\label{8}
\begin{array}{rlrl}
\sigma_{i}=\overline{\sigma}, & & \text { if }\ u_{xx}\left(1-(i-1)/n, \frac1{\sqrt{n}}\xi_{i-1}\right)\geq0, \\
\sigma_{i}=\underline{\sigma}, & & \text { if }\  u_{xx}\left(1-(i-1)/n, \frac1{\sqrt{n}}\xi_{i-1}\right)<0, \\
\end{array}
\end{equation}
 where $u$ is the solution to the G-heat equation \eqref{eq:4}.

For $\varphi(x)=\mathbbm{1}(x>c)$, $u$ is given by $u(t, x) = f\{(x - c)/\sqrt{t}\}$ where
$$
f(y)=f(y ; \underline{\sigma}, \overline{\sigma})=\frac{2}{\overline{\sigma}+\underline{\sigma}} \int_{-y}^{\infty}\{\phi(z / \overline{\sigma}) \mathbbm{1}(z \geq 0)+\phi(z / \underline{\sigma}) \mathbbm{1}(z<0)\} d z
$$
Direct calculations give that
$$
\begin{aligned}
&u_{x x}(t, x) =\frac{1}{t} f_{y y}\left(\frac{x-c}{\sqrt{t}}\right),  \\
&f_{y y}(y) =\frac{-2 y}{\bar{\sigma}+\underline{\sigma}}\left\{\frac{1}{\bar{\sigma}^2} \phi(y / \bar{\sigma}) \mathbbm{1}(y \leq 0)+\frac{1}{\underline{\sigma}^2} \phi(y / \underline{\sigma}) \mathbbm{1}(y>0)\right\} .
\end{aligned}
$$
It is obvious that $u_{xx}(t,x) \geq 0$ is equivalent to $x \leq c$ and $u_{xx}(t,x) < 0$ is equivalent to $x > c$. The proof is then complete.
\end{proof}

\begin{remark}
It can be seen from the proof of Theorem \ref{thm:3} that the theorem still holds in the $k$-sample case.
\end{remark}

\subsection{Comparison with the traditional test procedure}\label{sec:3.2}
Here we compare the theoretical performance of our test with the traditional procedure under the scenario that the experimenter manipulates the data using the optimal strategy found in Theorem~\ref{thm:3}. The following corollary reveals the failure of the traditional test in~\eqref{2} in  controlling the actual type I error rate.

\begin{proposition}[\cite{peng2020hypothesis}]\label{prop:1}
For a given nominal significance level $\alpha\ (0<\alpha<0.5)$, if the experimenter chooses data $Z_{1},\ldots,Z_{n}$ as in Theorem \ref{thm:3} with $c=\bar{\sigma} \Phi^{-1}\left(1-\alpha\right)$, then the actual type I error rate of the rejection region given by the traditional test procedure is at least $2\alpha\overline{\sigma}/(\underline{\sigma}+\overline{\sigma})$ (which is larger than $\alpha$ unless $\overline{\sigma}=\underline{\sigma}$).
\end{proposition}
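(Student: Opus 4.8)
The plan is to reduce the data-dependent rejection event $W_{\tI}=\{\sqrt n(\bar Z_n-\mu_0)>S_n\Phi^{-1}(1-\alpha)\}$ of \eqref{2} to the fixed-threshold event $\{\sqrt n(\bar Z_n-\mu_0)>c\}$, whose asymptotic probability under the manipulation is already supplied by Theorem~\ref{thm:3}. First I would evaluate the worst-case tail \eqref{eq:5} at $c=\overline\sigma\,\Phi^{-1}(1-\alpha)$. Since $\alpha<0.5$ we have $c\ge 0$, so only the $z\ge 0$ branch of the integrand contributes; the substitution $u=z/\overline\sigma$ gives
\begin{equation*}
p_1(c;\underline\sigma,\overline\sigma)=\frac{2\overline\sigma}{\overline\sigma+\underline\sigma}\bigl(1-\Phi(\Phi^{-1}(1-\alpha))\bigr)=\frac{2\alpha\overline\sigma}{\overline\sigma+\underline\sigma}.
\end{equation*}
By Theorem~\ref{thm:3}, under the optimal strategy $\theta^*$ built for this $c$, the manipulated statistic $T_n:=\sqrt n(\bar Z_n-\mu_0)$ satisfies $\bbP(T_n>c)\to 2\alpha\overline\sigma/(\overline\sigma+\underline\sigma)$.

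Second, I would bound the sample standard deviation $S_n$ that enters the traditional critical value. Under the null, each selected observation is $Z_i=\mu_0+\sigma_i\varepsilon_i$ with $\sigma_i\in\{\underline\sigma,\overline\sigma\}$ predictable and $\varepsilon_i$ Gaussian, so
\begin{equation*}
\frac1n\sum_{i=1}^n(Z_i-\mu_0)^2=\frac1n\sum_{i=1}^n\sigma_i^2+\frac1n\sum_{i=1}^n\sigma_i^2(\varepsilon_i^2-1).
\end{equation*}
The second sum is a martingale average vanishing in probability by the $L^2$ law of large numbers (all Gaussian moments are finite), while the first lies in $[\underline\sigma^2,\overline\sigma^2]$ surely because every $\sigma_i^2\le\overline\sigma^2$; together with $(\bar Z_n-\mu_0)^2=O_p(1/n)$ this yields $S_n^2\le\overline\sigma^2+o_p(1)$, hence $\bbP(S_n>\overline\sigma+\delta)\to0$ for each $\delta>0$. (This is the one-sided, finite-sample counterpart of the averaging in Lemma~\ref{lem:1}.)

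Third, I would combine the two facts through a set inclusion. Because $\Phi^{-1}(1-\alpha)>0$, on $\{S_n\le\overline\sigma+\delta\}$ the traditional threshold is at most $(\overline\sigma+\delta)\Phi^{-1}(1-\alpha)$, so
\begin{equation*}
W_{\tI}\ \supseteq\ \{T_n>(\overline\sigma+\delta)\Phi^{-1}(1-\alpha)\}\cap\{S_n\le\overline\sigma+\delta\},
\end{equation*}
giving $\bbP(W_{\tI})\ge\bbP\bigl(T_n>(\overline\sigma+\delta)\Phi^{-1}(1-\alpha)\bigr)-\bbP(S_n>\overline\sigma+\delta)$. Sending $n\to\infty$ annihilates the last term, and letting $\delta\downarrow0$ returns $\liminf_n\bbP(W_{\tI})\ge 2\alpha\overline\sigma/(\overline\sigma+\underline\sigma)$. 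The strict inequality over $\alpha$ is then elementary: $2\overline\sigma/(\overline\sigma+\underline\sigma)>1$ if and only if $\overline\sigma>\underline\sigma$.

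The main obstacle is precisely the last limiting step, because $\theta^*$ is tuned to the exact threshold $c$, and \eqref{eq:5} does not a priori pin down $\bbP(T_n>c')$ for the slightly inflated $c'=(\overline\sigma+\delta)\Phi^{-1}(1-\alpha)$ under this fixed strategy. I would close the gap by identifying the weak limit of $T_n$ under $\theta^*$, namely the terminal value $Y_1$ of the controlled diffusion whose diffusion coefficient is $\overline\sigma$ on $\{y\le c\}$ and $\underline\sigma$ on $\{y>c\}$ as dictated by \eqref{7}; this limit law has a continuous density, so $\bbP(T_n>c')\to\bbP(Y_1>c')$ depends continuously on $c'$ and tends to $p_1(c;\underline\sigma,\overline\sigma)$ as $\delta\downarrow0$. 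An alternative that sidesteps the diffusion limit is to note that whenever $S_n<\overline\sigma$ the rejection is strictly easier than at threshold $c$, so only the boundary fluctuations of $S_n$ around $\overline\sigma$ need be controlled, but the continuity route above is the cleaner one.
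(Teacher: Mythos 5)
Your proposal is correct, and its skeleton is the same as the paper's: evaluate $p_1$ at $c=\overline{\sigma}\Phi^{-1}(1-\alpha)$ to get $2\alpha\overline{\sigma}/(\overline{\sigma}+\underline{\sigma})$, replace the studentized threshold $S_n\Phi^{-1}(1-\alpha)$ by a deterministic one, and conclude by set inclusion together with the limit supplied by Theorem~\ref{thm:3}. The difference lies in how the random threshold is removed, and here your treatment is the more careful one. The paper's proof simply asserts that $S_n\leq\overline{\sigma}$ a.e.\ for $n$ large and writes $\bbP(W_{\tI})\geq\bbP(\sqrt{n}(\bar Z_n-\mu_0)>\overline{\sigma}\Phi^{-1}(1-\alpha))\to p_1(c;\underline{\sigma},\overline{\sigma})$ in one step; taken literally that assertion is not true, since under $\theta^*$ the partial-sum path stays below $c$ for the whole horizon with positive probability, on which event every $\sigma_i=\overline{\sigma}$ and $S_n^2$ is a chi-square--type average that exceeds $\overline{\sigma}^2$ with probability near $1/2$. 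So the ``main obstacle'' you isolate at the end is a genuine gap in the paper's own argument, not an artifact of your route. Your repair --- $\bbP(S_n>\overline{\sigma}+\delta)\to0$ by the predictable-martingale law of large numbers, the inflated threshold $(\overline{\sigma}+\delta)\Phi^{-1}(1-\alpha)$, then $\delta\downarrow0$ using continuity of the limiting law of $T_n$ under the fixed strategy $\theta^*$ (the oscillating-diffusion terminal value, whose distribution function is continuous) --- closes this gap at the cost of one extra ingredient, the weak limit identification; an equivalent patch would be an anti-concentration bound for $T_n$ on the strip $(c,(\overline{\sigma}+\delta)\Phi^{-1}(1-\alpha)]$ obtained from the sublinear CLT. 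Either way, what the paper's version buys is brevity, and what yours buys is a proof that actually withstands the scrutiny the paper's one-line claim does not.
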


\begin{proof}
For a given nominal significance level $\alpha$ and a group of data $Z_{1},\ldots,Z_{n}$, the rejection region given by the traditional linear expectation is, see Equation \eqref{2},
$$
W_{\tI}=\{\sqrt{n} (\bar{Z}_{n}-\mu_{0})>S_{n}\Phi^{-1}(1-\alpha)
\}.
$$
If the experimenter selects the data $Z_{1},\ldots,Z_{n}$ according to  the
optimal strategy in Theorem \ref{thm:3} with $c=\bar{\sigma} \Phi^{-1}\left(1-\alpha\right)$, observing that $S_{n}\leq\overline{\sigma}$ a.e. for large enough $n$, then when $\mu=\mu_{0}$ we can obtain
\begin{equation}
\begin{split}
&\lim_{n\to \infty} \bbE \left[\mathbbm{1}\left(\sqrt{n} (\bar{Z}_{n}-\mu_{0}) > S_{n} \Phi^{-1}\left(1-\alpha\right)\right)\right]\\
\geq&\lim_{n\to \infty} \bbE \left[\mathbbm{1}\left(\sqrt{n} (\bar{Z}_{n}-\mu_{0}) >\bar{\sigma} \Phi^{-1}\left(1-\alpha\right)\right)\right]\\
 =&p_{1}(\bar{\sigma} \Phi^{-1}\left(1-\alpha\right) ; \underline{\sigma}, \overline{\sigma})=\frac{2\alpha\overline{\sigma}}{\overline{\sigma}+\underline{\sigma}}.
\end{split}
\end{equation}
The proof is complete.
\end{proof}

\begin{remark}
Proposition~\ref{prop:1} shows that the p-hacking of the experimenter is successful in the sense that the test statistic has been artificially inflated by data selection which leads to an actual type I error larger than the expected nominal significance level $\alpha$. Such data cheating has no major impact if the data has intrinsic significance, e.g., an actually efficient new drug. In the opposite situation where no such intrinsic significance exists, the data cheating can report a fake significance with an inflated test statistic.
\end{remark}

\begin{remark}
The result in Proposition~\ref{prop:1} was established  in \cite{peng2020hypothesis}, and we restate it here for the convenience of readers. It is worth noting that when $\overline{\sigma}$ is very large relative to $\underline{\sigma}$, the actual type I error rate of the traditional rejection region $W_{\tI}$ is approximately equal to $2\alpha$ by selecting  the data sequence as in Theorem~\ref{thm:3}.
\end{remark}

  We now theoretically prove the success of our robust test in controlling the actual type I error rate with the p-hacked data.

\begin{proposition}\label{prop:2}
For a given nominal significance level $\alpha\ (0<\alpha<0.5)$ and with the p-hacked data  $Z_{1},\ldots,Z_{n}$ selected as in Theorem \ref{thm:3} with $c=c_1=\overline{\sigma}\Phi^{-1}\left(1-\frac{\alpha(\overline{\sigma}+\underline{\sigma})}{2\overline{\sigma}}\right)$, the actual asymptotic type I error rate of the robust test given by the sublinear expectation in \eqref{eq:6} equals $\alpha$.
\end{proposition}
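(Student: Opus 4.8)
The plan is to combine three ingredients that are already in place: the asymptotic optimality of the manipulation strategy established in Theorem~\ref{thm:3}, the worst-case false-rejection formula \eqref{eq:5}, and the defining equation \eqref{eq:c1} for the critical value $c_1$.

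First I would record what the actual type I error rate \emph{is}. Under the null with $\mu=\mu_0$, the data $Z_1,\ldots,Z_n$ are generated by running the strategy $\theta^*$ of Theorem~\ref{thm:3} with the threshold set to $c=c_1$, and the robust test rejects on $W_{\tI}^{G}=\{\sqrt{n}(\bar{Z}_{n}-\mu_{0})>c_1\}$. Thus the quantity to be computed is $\lim_{n\to\infty}\bbP(\sqrt{n}(\bar{Z}_{n}-\mu_{0})>c_1)$. The key observation is that this manipulation amounts to a \emph{bang-bang} predictable choice of variances: at each stage the experimenter picks $\sigma_i\in\{\underline{\sigma},\overline{\sigma}\}$ based on $\xi_{i-1}$, so $\{\sigma_i\}\in\{\underline{\sigma},\overline{\sigma}\}^n\subset\Sigma(\underline{\sigma},\overline{\sigma})$ is an admissible member of the family over which the supremum in \eqref{eq:5} is taken.

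Next I would invoke the optimality assertion of Theorem~\ref{thm:3}: the strategy $\theta^*$ asymptotically attains the supremum in \eqref{eq:5}, so that
\begin{equation*}
\lim_{n\to\infty}\bbP\left(\sqrt{n}(\bar{Z}_{n}-\mu_{0})>c_1\right)
=\lim_{n\to\infty}\sup_{\{\sigma_i\}\in\Sigma(\underline{\sigma},\overline{\sigma})}
\bbE\left[\mathbbm{1}\left(\sqrt{n}(\bar{X}_{n}-\mu_{0})>c_1\right)\right]
=p_{1}(c_1;\underline{\sigma},\overline{\sigma}).
\end{equation*}
Finally, the critical value in \eqref{eq:c1} was constructed precisely as the unique solution of $p_{1}(c;\underline{\sigma},\overline{\sigma})=\alpha$, whence $p_{1}(c_1;\underline{\sigma},\overline{\sigma})=\alpha$. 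Chaining the two displays gives that the actual asymptotic type I error rate equals $\alpha$.

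The step I expect to require the most care is the middle one, namely that $\theta^*$ genuinely \emph{realizes} the supremum in \eqref{eq:5} rather than merely approaching it from below. This rests on the bang-bang structure of the optimal control for the G-heat equation \eqref{eq:4} (so that restricting to the two extreme variance levels loses nothing) together with the convergence of the discrete manipulation scheme to the value function, as established via the Appendix of \cite{peng2020hypothesis} underlying Theorem~\ref{thm:3}. Once this is granted, the conclusion is sharper than the lower bound of Proposition~\ref{prop:1}: because the robust threshold $c_1$ is deterministic, in contrast to the random $S_n$ appearing in $W_{\tI}$, no additional bounding step is needed and one obtains an exact equality rather than an inequality.
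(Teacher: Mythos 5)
Your proposal is correct and follows essentially the same route as the paper's own proof: identify the p-hacked data as realizing the asymptotically optimal strategy of Theorem~\ref{thm:3}, so that the actual asymptotic rejection probability equals the supremum $p_{1}(c_1;\underline{\sigma},\overline{\sigma})$ in \eqref{eq:5}, which equals $\alpha$ by the defining equation \eqref{eq:c1} for $c_1$. Your added care about why $\theta^*$ attains (rather than merely approaches) the supremum is a sensible elaboration of what the paper delegates to Theorem~\ref{thm:3} and the Appendix of \cite{peng2020hypothesis}, but it is not a different argument.
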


\begin{proof}
For a given nominal significance level $\alpha$ and a group of data $Z_{1},\ldots,Z_{n}$, the rejection region of our robust test  given by the sublinear expectation is, see equation \eqref{eq:6},
$$
W_{\tI}^{G}=\{\sqrt{n} (\bar{Z}_{n}-\mu_{0})>c_{1}
\},\ \text{with} \ c_{1}=\overline{\sigma}\Phi^{-1}\left(1-\frac{\alpha(\overline{\sigma}+\underline{\sigma})}{2\overline{\sigma}}\right).
$$
Assume that the experimenter chooses data $Z_{1},\ldots,Z_{n}$ according to the optimal strategy in Theorem \ref{thm:3} with $c=c_{1}$, when $\mu=\mu_{0}$ we have

\[\lim_{n\to \infty} \bbE \left[\mathbbm{1}\left(\sqrt{n} (\bar{Z}_{n}-\mu_{0}) > c_1\right)\right]=p_{1}( c_1 ; \underline{\sigma}, \overline{\sigma})=\alpha.\]
The type I error rate of the robust test is controlled.
\end{proof}

\begin{remark}
For our robust rejection region $W_{\tI}^{G}$, that is $\{\sqrt{n} (\bar{Z}_{n}-\mu_{0})>c_1\}$, if the experimenter selects data $Z_{1},\ldots,Z_{n}$ according to equation \eqref{7} with $c=c_{*}>0$, $c_{*}\neq c_1$, when $\mu=\mu_{0}$ we have
\begin{equation}\nonumber
\begin{split}
&\lim_{n\to \infty} \bbE \left[\mathbbm{1}\left(\sqrt{n} (\bar{Z}_{n}-\mu_{0}) > c_1\right)\right]\\
 \leq&\lim _{n \rightarrow \infty} \sup _{\left\{\sigma_{i}\right\} \in \Sigma(\underline{\sigma}, \overline{\sigma})} \bbE  \left[\mathbbm{1}\left(\sqrt{n} (\bar{Z}_{n}-\mu_{0}) > c_1\right)\right]\\
=& p_{1}( c_1 ; \underline{\sigma}, \overline{\sigma})=\alpha.
\end{split}
\end{equation}
Therefore, for any selection strategy $\{\sigma_{i}\}_{i=1}^{n}\in \Sigma(\underline{\sigma},\overline{\sigma})$, the robust test given by the sublinear expectation in \eqref{eq:6}  can ensure that the actual asymptotic type I error rate does not exceed $\alpha$.
\end{remark}

Obviously, for sufficiently large $n$
$$
(c_1,\infty)\subset (S_{n} \Phi^{-1}(1-\alpha),\infty), a.e..
$$
Thus, if the experimenter chooses data $Z_{1},\ldots,Z_{n}$ according to the optimal strategy and constructs the rejection region $(S_{n} \Phi^{-1}(1-\alpha),\infty)$ based on classical central limit theorem, the experimenter will more easily   reject the null hypothesis. This  does not happen when the rejection region is $(c_1,\infty)$, which avoids being cheated by the dishonest experimenter.

\section{Monte-Carlo study}\label{sec:4}
In this section, we present some simulation results to comprehensively compare the finite sample performance of our robust significance test  in Section \ref{sec:2} with the traditional method. Our new method presumes that the data have variance uncertainty, however, the traditional method can not recognize such variance uncertainty and blindly believes that the data are independent and identically distributed. To make clear comparisons, we consider the empirical type I error rates and the empirical powers to assess the two methods.

For each simulation, we assume that $\underline{\sigma}$ and $\overline{\sigma}$ are known and  use data $Z_{1},\ldots,Z_{n}$ to conduct the testing problem \eqref{eq:1} under classical linear expectation and sublinear expectation, respectively. The empirical type I error rates and the empirical powers are calculated from 5,000 repetitions. Recall that the rejection region under the classical linear expectation is, see Equation~\eqref{2}:
$$
W_{\tI}=\{\sqrt{n} (\bar{Z}_{n}-\mu_{0})> S_{n}\Phi^{-1}(1-\alpha)
\}.
$$
The rejection region under the sublinear expectation is, see equation \eqref{eq:6}:
$$
W_{\tI}^{G}=\{\sqrt{n} (\bar{Z}_{n}-\mu_{0})>c_{1}
\},\text{ with } \ c_{1}=\overline{\sigma}\Phi^{-1}\left(1-\frac{\alpha(\overline{\sigma}+\underline{\sigma})}{2\overline{\sigma}}\right).
$$

The two simulation experiments have almost identical settings; the only difference is that the data selection threshold $c$ in equation \eqref{7} is different. In the first experiment, $c=\bar{\sigma} \Phi^{-1}\left(1-\alpha\right)$, which corresponds to the optimal p-hacking strategy to achieve the maximum false rejection probability for the traditional test, see Proposition~\ref{prop:1}. In the second experiment, $c=c_1$, which is the optimal value that maximizes the false rejection probability of our robust test using hacked data, see Proposition~\ref{prop:2}.

\begin{simulation}\label{sim:1}
Assume that the first sample $\{W_{1i}\}_{i=1}^{n}$ obeys $\mathcal{N}(\mu,\overline{\sigma}^{2})$ and the second sample $\{W_{2i}\}_{i=1}^{n}$ obeys $\mathcal{N}(\mu,\underline{\sigma}^{2})$, where $\underline{\sigma}=0.5$ and $\overline{\sigma}=1$. Let $\mu_{0}=0$ and the significance level $\alpha=0.05$, we obtain the optimal strategy $\theta_{1}^{*}=(\sigma_{1},\ldots,\sigma_{n})$ according to Theorem \ref{thm:3} with $c=\bar{\sigma} \Phi^{-1}\left(1-\alpha\right)$ and define
$$
Z_i=\left\{\begin{array}{l}
W_{1i}, \text { if } \sigma_{i}=\overline{\sigma}, \\[1mm]
W_{2i}, \text { if } \sigma_{i}=\underline{\sigma}.
\end{array}\right.
$$
\end{simulation}

\begin{table}[hb]
\setlength{\abovecaptionskip}{0.1cm}
\setlength{\belowcaptionskip}{0.2cm}
\centering
\caption{Empirical type I error rates ($\mu=\mu_{0}$) over 5,000 repetitions for Simulation \ref{sim:1}.}
\label{table:1}
{\small
\begin{tabular}{lcc}
\toprule
&robust test&classical test\\
\midrule
n=50&0.0448&0.0730\\
n=100&0.0458&0.0744\\
n=150&0.0452&0.0770\\
n=200&0.0390&0.0670\\
n=300&0.0394&0.0682\\
n=400&0.0430&0.0746\\
n=500&0.0364&0.0698\\
n=600&0.0396&0.0746\\
n=700&0.0412&0.0778\\
n=800&0.0368&0.0678\\
n=900&0.0372&0.0688\\
n=1000&0.0444&0.0722\\
\bottomrule
\end{tabular}
}
\end{table}

\begin{figure}[ht]
\setlength{\abovecaptionskip}{0.1cm}
\setlength{\belowcaptionskip}{0.2cm}
\centering
\includegraphics[width=0.6\textwidth]{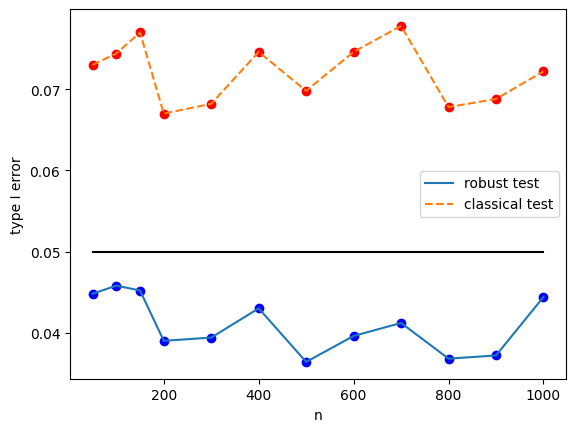}
\caption{Empirical type I error rate ($\mu=\mu_{0}$) plot over 5,000 repetitions for Simulation \ref{sim:1}}
\label{fig:1}
\end{figure}

\begin{figure}[ht]
\setlength{\abovecaptionskip}{0.1cm}
\setlength{\belowcaptionskip}{0.2cm}
\centering
\subfloat{\includegraphics[width=0.5\textwidth]{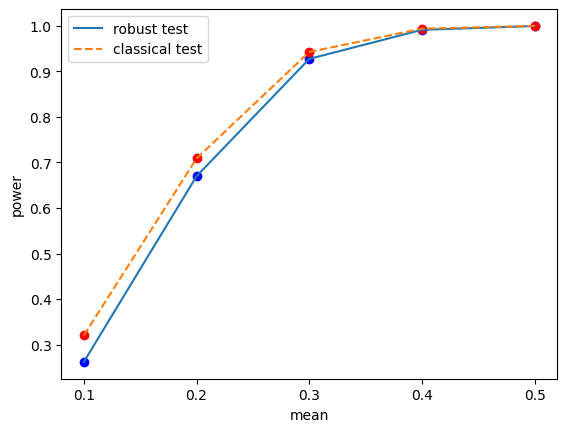}}
~~
\subfloat{\includegraphics[width=0.5\textwidth]{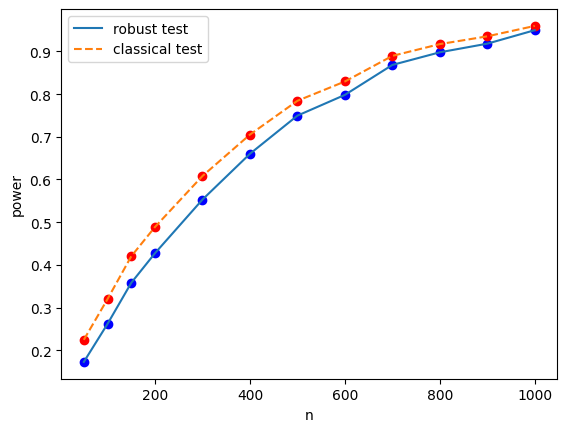}}
\caption{Empirical power plots over 5,000 repetitions for Simulation \ref{sim:1} with n=100 and varying $\mu$ (left panel), and with $\mu$=0.1 and varying $n$ (right panel).}
\label{fig:2}
\end{figure}

Simulation \ref{sim:1} corresponds to Proposition~\ref{prop:1}, where the experimenter manipulates the data to maximize the type I error rate of the traditional rejection region $W_{\tI}$. The results associated with this simulation are reported in Table \ref{table:1}, Figures \ref{fig:1} and   \ref{fig:2}. From Table \ref{table:1} and Figure \ref{fig:1} we can see that the empirical type I error rate ($\mu=\mu_{0}$) of our robust test is slightly less than 0.05, while the one of the classical test is significantly greater than 0.05 and does not approach 0.05 as $n$ increases: indeed the error rate  inflation is constantly between 35\% and 55\%! Note also that the empirical type I error rate ($\mu=\mu_{0}$) of the classical test is greater than or equal to $2\alpha\overline{\sigma}/(\underline{\sigma}+\overline{\sigma})\approx0.067$ for all $n$, as predicted by the theoretical result in Proposition~\ref{prop:1}. In Figure \ref{fig:2}, when n=100 and $\mu$ increases (left panel), the power increases with $\mu$ under both methods, and the difference between the two becomes smaller and smaller. When $\mu$=0.1 and $n$ increases (right panel), similar conclusions hold.

\begin{simulation}\label{sim:2}
Assume that the first sample $\{W_{1i}\}_{i=1}^{n}$ obeys $\mathcal{N}(\mu,\overline{\sigma}^{2})$ and the second sample $\{W_{2i}\}_{i=1}^{n}$ obeys $\mathcal{N}(\mu,\underline{\sigma}^{2})$, where $\underline{\sigma}=0.5$ and $\overline{\sigma}=1$. Let $\mu_{0}=0$ and the significance level $\alpha=0.05$, we obtain the optimal strategy $\theta_{2}^{*}=(\sigma_{1},\ldots,\sigma_{n})$ according to Theorem \ref{thm:3} with $c=c_1=\overline{\sigma}\Phi^{-1}\left(1-\frac{\alpha(\overline{\sigma}+\underline{\sigma})}{2\overline{\sigma}}\right)$ and define
$$
Z_i=\left\{\begin{array}{l}
W_{1i}, \text { if } \sigma_{i}=\overline{\sigma}, \\[1mm]
W_{2i}, \text { if } \sigma_{i}=\underline{\sigma}.
\end{array}\right.
$$
\end{simulation}

Simulation \ref{sim:2} corresponds to Proposition~\ref{prop:2}, where the experimenter manipulates the data to maximize the type I error rate of the robust rejection region $W_{\tI}^{G}$. The simulation results associated with this simulation are summarized in Table \ref{table:2}, Figures \ref{fig:3} and   \ref{fig:4}. Table \ref{table:2} and Figure \ref{fig:3} show that the empirical type I error rate ($\mu=\mu_{0}$) of the robust test is around 0.05, which validates the theoretical result in Proposition \ref{prop:2}. Again, the empirical type I error rate ($\mu=\mu_{0}$) of the classical test is much larger than 0.05 even if $n=1000$, with about 20\%-30\% inflation. For the empirical powers, Figure \ref{fig:4} indicates that the results are similar to those in Simulation \ref{sim:1}.

\begin{table}[ht]
\setlength{\abovecaptionskip}{0.1cm}
\setlength{\belowcaptionskip}{0.2cm}
\centering
\caption{Empirical type I error rates ($\mu=\mu_{0}$) over 5,000 repetitions for Simulation \ref{sim:2}}
\label{table:2}
{\small
\begin{tabular}{lcc}
\toprule
&robust test&classical test\\
\midrule
n=50&0.0492&0.0658\\
n=100&0.0500&0.0630\\
n=150&0.0494&0.0644\\
n=200&0.0506&0.0644\\
n=300&0.0502&0.0650\\
n=400&0.0508&0.0634\\
n=500&0.0506&0.0636\\
n=600&0.0492&0.0612\\
n=700&0.0514&0.0666\\
n=800&0.0512&0.0640\\
n=900&0.0468&0.0586\\
n=1000&0.0516&0.0620\\
\bottomrule
\end{tabular}
}
\end{table}

\begin{figure}[hb!]
\setlength{\abovecaptionskip}{0.1cm}
\setlength{\belowcaptionskip}{0.2cm}
\centering
\includegraphics[width=0.6\textwidth]{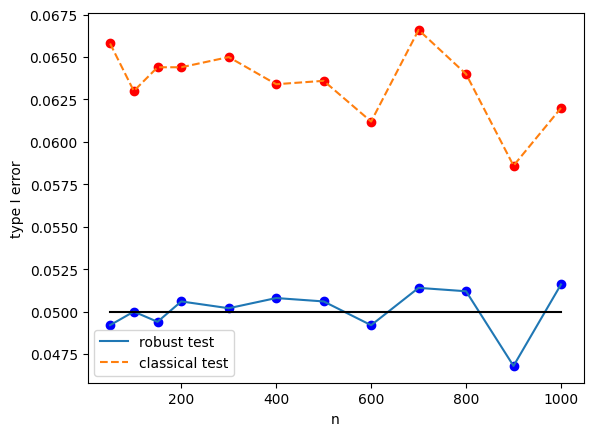}
\caption{Empirical type I error rate ($\mu=\mu_{0}$) plot over 5,000 repetitions for Simulation \ref{sim:2}}
\label{fig:3}
\end{figure}

\begin{figure}[hb!]
\setlength{\abovecaptionskip}{0.1cm}
\setlength{\belowcaptionskip}{0.2cm}
\centering
\subfloat{\includegraphics[width=0.5\textwidth]{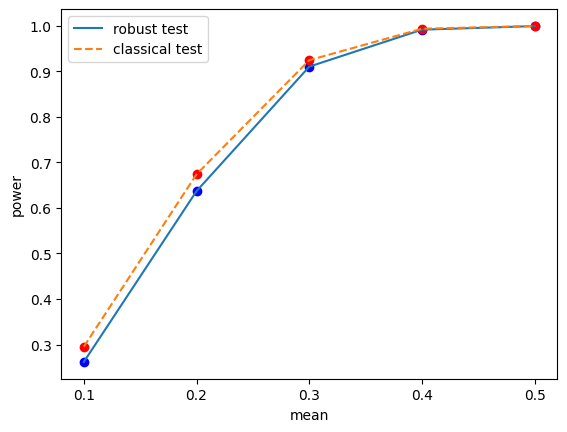}}
~~
\subfloat{\includegraphics[width=0.5\textwidth]{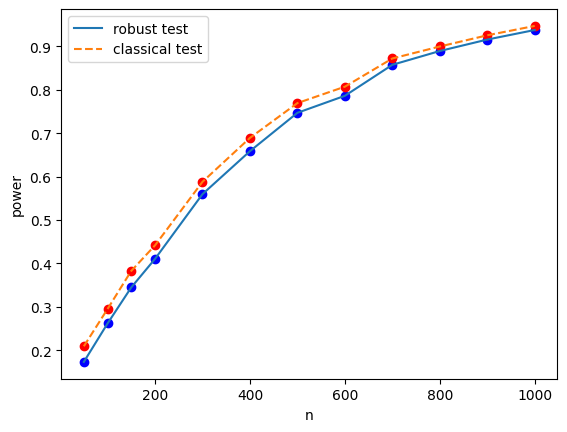}}
\caption{Empirical power plots over 5,000 repetitions for Simulation \ref{sim:2} with n=100 and varying $\mu$ (left panel), and with $\mu$=0.1 and varying $n$ (right panel).}
\label{fig:4}
\end{figure}

In both simulations, our new method effectively controls the type I error rate, whereas the traditional method fails to control it which provides an opportunity for the experimenter to cheat by manipulating the data. Although the new method suffers from low power when $n$ and $\mu$ are both very small, it achieves high power as $n$ or $\mu$ increases. These findings confirm  well our  theoretic results. In conclusion, the two simulation studies demonstrate that our proposed method outperforms the traditional method when facing p-hacked data thanks to its   robustness and  satisfactory test power.

\section{Discussions}\label{sec:5}
Using the modern theory of sublinear expectation to deal with distribution uncertainty,  we have developed a robust procedure for the traditional significance test. The procedure is superior to the traditional approach when facing p-hacked data like in the situation analyzed  by \cite{peng2020hypothesis} where data has variance uncertainty and  a dishonest experimenter can select the data sequence using an optimal strategy
to artificially inflate the rejection probability. Particularly in this situation,  the traditional procedure will report an inflated and wrong test significance while our new method will resist to such wrong inflation without loosing much test power.

Several interesting directions are worth investigation in the future. First, one may consider a robust significance test under variance uncertainty in a multivariate or even high-dimensional situation.  Another direction is to explore the significance test for linear regressions under model or noise distribution uncertainty.

\newpage
\appendix
\section{Basic concepts of sublinear expectation}\label{sec:A}

As theory of sublinear expectation is recent and technical,
this section introduces some basic concepts of the theory to help the understanding  of our study.   We refer to \cite{peng2019nonlinear} for a book-length introduction of the theory.

\begin{definition}
Let $\Omega$ be a given set and let $\mathcal{H}$ be a linear space of real valued functions defined on $\Omega$. we suppose that $\mathcal{H}$ satisfies:
\begin{enumerate}[(i)]
  \item $c \in \mathcal{H}$ for each constant $c$;
  \item $|X| \in \mathcal{H}$ if $X \in \mathcal{H}$.
\end{enumerate}
The space $\mathcal{H}$ can be considered as the \emph{space of random variables}.
\end{definition}

\begin{definition}
A \emph{sublinear expectation} $\hat{\mathbb{E}}$ is a functional $\hat{\mathbb{E}}:\mathcal{H}\rightarrow \mathbb{R}$ satisfying:
\begin{enumerate}[(i)]
  \item Monotonicity: $\hat{\mathbb{E}}[X]\geq \hat{\mathbb{E}}[Y]$ if $X\geq Y$;
  \item Constant preserving: $\hat{\mathbb{E}}[c]=c,\  \forall c \in \mathbb{R}$;
  \item Sub-additivity: $\forall X,Y \in \mathcal{H},\ \hat{\mathbb{E}}[X+Y]\leq \hat{\mathbb{E}}[X]+\hat{\mathbb{E}}[Y]$;
  \item Positive homogeneity: $\hat{\mathbb{E}}[\lambda X]=\lambda \hat{\mathbb{E}}[X],\ \forall \lambda\geq0$.
\end{enumerate}
The triple $(\Omega,\mathcal{H},\hat{\mathbb{E}})$ is called a \emph{sublinear expectation space}.
\end{definition}

\begin{definition}
A sublinear expectation $\hat{\mathbb{E}}$ defined on $(\Omega,\mathcal{H})$ is said to be regular if
$$
\hat{\mathbb{E}}[X_{n}]\rightarrow 0, \quad n\to\infty,
$$
for each sequence $\{X_{n}\}_{n=1}^{\infty}$ of random variables in $\mathcal{H}$ such that $X_{n}(\omega)\downarrow0$ for each $\omega \in \Omega$.
\end{definition}

\begin{theorem}
(Representation of a sublinear expectation) Let $\hat{\mathbb{E}}$ be a sublinear expectation defined on $\mathcal{H}$, then there exists a family of linear functionals $\{E_{\theta}:\theta\in\Theta\}$ defined on $(\Omega,\mathcal{H})$ such that
$$
\hat{\mathbb{E}}[X]=\sup \limits_{\theta \in \Theta} E_{\theta}[X],\quad \forall X \in \mathcal{H}.
$$
\end{theorem}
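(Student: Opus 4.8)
The plan is to recognize the representation formula as a direct instance of the Hahn--Banach extension principle. Writing $p(X):=\hat{\mathbb{E}}[X]$, properties (iii) sub-additivity and (iv) positive homogeneity say precisely that $p$ is a \emph{sublinear functional} on the real linear space $\mathcal{H}$, namely $p(X+Y)\le p(X)+p(Y)$ and $p(\lambda X)=\lambda p(X)$ for every $\lambda\ge 0$. This is exactly the hypothesis under which Hahn--Banach operates. Accordingly, I would take as index family
\[
\Theta:=\{E:\mathcal{H}\to\mathbb{R}\ \text{linear}\ :\ E[Y]\le \hat{\mathbb{E}}[Y]\ \text{for all}\ Y\in\mathcal{H}\},
\]
and write $E_\theta$ for the functional indexed by $\theta\in\Theta$. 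Everything then reduces to the two inequalities $\sup_{\theta}E_\theta[X]\le \hat{\mathbb{E}}[X]$ and $\sup_{\theta}E_\theta[X]\ge \hat{\mathbb{E}}[X]$ for each fixed $X$.

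The first inequality is immediate from the definition of $\Theta$, since every admissible $E_\theta$ satisfies $E_\theta[X]\le\hat{\mathbb{E}}[X]$. The content lies in the reverse inequality, where I must exhibit, for each fixed $X_0$, a single $E_\theta\in\Theta$ that \emph{attains} the value $\hat{\mathbb{E}}[X_0]$. To do so I would first define a linear functional on the one-dimensional subspace $\mathbb{R}X_0$ by $E(\lambda X_0):=\lambda\,\hat{\mathbb{E}}[X_0]$, and check that it is dominated by $p$ there: for $\lambda\ge 0$ this is positive homogeneity, while for $\lambda<0$ it follows from $0=p(0)\le p(X_0)+p(-X_0)$, which yields $p(-X_0)\ge -p(X_0)$ and hence $\lambda\,\hat{\mathbb{E}}[X_0]=\lambda p(X_0)\le |\lambda|\,p(-X_0)=p(\lambda X_0)$. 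The Hahn--Banach theorem then furnishes a linear functional $E_{X_0}$ on all of $\mathcal{H}$ with $E_{X_0}\le p$ everywhere and $E_{X_0}[X_0]=\hat{\mathbb{E}}[X_0]$. By construction $E_{X_0}\in\Theta$, so the supremum at $X_0$ is at least $\hat{\mathbb{E}}[X_0]$, which closes the reverse inequality and delivers the representation.

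The main obstacle is not any single calculation but the correct invocation of Hahn--Banach in the possibly infinite-dimensional space $\mathcal{H}$: the extension rests on Zorn's lemma, and one must ensure that the dominating functional used throughout is the genuine sublinear $p=\hat{\mathbb{E}}$ rather than a seminorm, so that the extension stays \emph{below} $\hat{\mathbb{E}}$ and not merely below $|\hat{\mathbb{E}}|$. As a closing remark I would verify that monotonicity (i) and constant preservation (ii) are inherited by each $E_\theta$: if $Y\ge 0$ then $E_\theta[-Y]\le\hat{\mathbb{E}}[-Y]\le\hat{\mathbb{E}}[0]=0$, so $E_\theta[Y]\ge 0$; and $E_\theta[c]\le\hat{\mathbb{E}}[c]=c$ together with $E_\theta[-c]\le -c$ forces $E_\theta[c]=c$. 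Hence each $E_\theta$ is in fact a \emph{linear expectation}, a positive normalized linear functional, which is exactly the property justifying the interpretation of $\hat{\mathbb{E}}$ as a worst-case maximum over a family of linear expectations used throughout the paper.
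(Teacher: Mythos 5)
Your proof is correct, and it is essentially the canonical argument: the paper itself states this representation theorem in Appendix~\ref{sec:A} without proof, deferring to \cite{peng2019nonlinear}, where the result is established by exactly the Hahn--Banach route you follow (domination of a one-dimensional linear functional by the sublinear functional $p=\hat{\mathbb{E}}$, extension to $\mathcal{H}$, and the observation that every dominated linear functional inherits monotonicity and constant preservation). Your verification of domination for $\lambda<0$ via $p(-X_0)\ge -p(X_0)$ and the closing check that each $E_\theta$ is a genuine linear expectation are both accurate, so there is nothing to correct.
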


\begin{theorem}
(Robust Daniell-Stone Theorem) Assume that $(\Omega,\mathcal{H},\hat{\mathbb{E}})$ is a sublinear expectation space. If $\hat{\mathbb{E}}$ is regular, then there exists a class of probability measures $\{P_{\theta}\}_{\theta\in\Theta}$ on $(\Omega,\sigma(\mathcal{H}))$ such that
$$
\hat{\mathbb{E}}[X]=\sup \limits_{\theta \in \Theta} \int_{\Omega}X(\omega)dP_{\theta}, \quad \text{ for each } X \in \mathcal{H},
$$
where $\sigma(\mathcal{H})$ is the smallest $\sigma$-algebra generated by $\mathcal{H}$.
\end{theorem}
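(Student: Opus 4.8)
The plan is to deduce the robust representation from the preceding Representation Theorem together with the classical (scalar) Daniell--Stone theorem applied to each linear functional in the representing family. First I would invoke the Representation Theorem to obtain a family $\{E_{\theta}\}_{\theta\in\Theta}$ of linear functionals on $(\Omega,\mathcal{H})$ with $\hat{\mathbb{E}}[X]=\sup_{\theta\in\Theta}E_{\theta}[X]$ for every $X\in\mathcal{H}$. Since each $E_{\theta}[X]$ is bounded above by this supremum, the domination $E_{\theta}[X]\le \hat{\mathbb{E}}[X]$ holds for all $\theta$ and all $X$. From this one extracts the two structural properties each $E_{\theta}$ must share with a genuine integral. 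Positivity: if $X\ge 0$ then $-X\le 0$, so by linearity and domination $-E_{\theta}[X]=E_{\theta}[-X]\le \hat{\mathbb{E}}[-X]\le \hat{\mathbb{E}}[0]=0$, giving $E_{\theta}[X]\ge 0$. Normalization: applying domination to the constants $c$ and $-c$ yields $E_{\theta}[c]\le c$ and $E_{\theta}[c]\ge c$, hence $E_{\theta}[1]=1$.

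The second step is to verify the two hypotheses required by the classical Daniell--Stone theorem. For the lattice structure, I note that $\mathcal{H}$ is closed under $|\cdot|$ and is a linear space, so the identities $X\vee Y=\tfrac12(X+Y+|X-Y|)$ and $X\wedge Y=\tfrac12(X+Y-|X-Y|)$ show that $\mathcal{H}$ is a vector lattice; moreover $1\in\mathcal{H}$ forces Stone's condition $X\wedge 1\in\mathcal{H}$, so $\mathcal{H}$ is a Stone vector lattice. For the continuity (Daniell) condition, I transfer regularity from $\hat{\mathbb{E}}$ to each $E_{\theta}$: if $X_{n}\downarrow 0$ pointwise, then positivity and domination give $0\le E_{\theta}[X_{n}]\le \hat{\mathbb{E}}[X_{n}]$, and the assumed regularity of $\hat{\mathbb{E}}$ forces $\hat{\mathbb{E}}[X_{n}]\to 0$, so $E_{\theta}[X_{n}]\to 0$. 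Thus every $E_{\theta}$ is a positive, normalized, downward-continuous linear functional on a Stone vector lattice.

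With these hypotheses in place, the classical Daniell--Stone theorem produces, for each $\theta$, a unique measure $P_{\theta}$ on the generated $\sigma$-algebra $\sigma(\mathcal{H})$ with $E_{\theta}[X]=\int_{\Omega}X\,dP_{\theta}$ for all $X\in\mathcal{H}$; the normalization $E_{\theta}[1]=1$ makes $P_{\theta}(\Omega)=1$, so each $P_{\theta}$ is a probability measure. Substituting back into the representation gives $\hat{\mathbb{E}}[X]=\sup_{\theta}E_{\theta}[X]=\sup_{\theta}\int_{\Omega}X\,dP_{\theta}$, which is the claim.

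I expect the main obstacle to be the transfer of the continuity condition rather than any of the algebraic bookkeeping: the whole argument hinges on propagating the regularity of the nonlinear operator $\hat{\mathbb{E}}$ down to every dominated linear functional $E_{\theta}$, since it is precisely Daniell's downward-continuity that upgrades a positive linear functional on $\mathcal{H}$ into a countably additive measure. The domination inequality does this cleanly here, but it is worth checking that regularity has been posited for all sequences $X_{n}\downarrow 0$ in $\mathcal{H}$, so that the Daniell hypothesis applies verbatim; the verification of the Stone lattice property and of the positivity and normalization of each $E_{\theta}$ are then routine.
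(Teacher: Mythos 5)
Your proof is correct: the domination $E_{\theta}[X]\le\hat{\mathbb{E}}[X]$ extracted from the Representation Theorem yields positivity and normalization of each $E_{\theta}$ (via monotonicity and constant preservation of $\hat{\mathbb{E}}$), the lattice identities together with $1\in\mathcal{H}$ make $\mathcal{H}$ a Stone vector lattice, the regularity hypothesis transfers verbatim into Daniell's continuity condition for each dominated functional, and the classical Daniell--Stone theorem then produces the probability measures $P_{\theta}$ on $\sigma(\mathcal{H})$. The paper itself states this theorem without proof as background material deferred to \cite{peng2019nonlinear}, and your argument is exactly the canonical proof given there, so the two approaches coincide.
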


\begin{definition}
(Distribution equality) Let $X$ and $Y$ be two random variables defined on a sublinear expectation space $(\Omega,\mathcal{H},\hat{\mathbb{E}})$. They are called identically distributed, denoted by $X\overset{\text{d}}{=}Y$, if
$$
\hat{\mathbb{E}}[\varphi(X)]=\hat{\mathbb{E}}[\varphi(Y)],\quad \forall \varphi\in \emph{C}_{Lip}(\mathbb{R}).
$$
\end{definition}

\begin{remark}
The distribution of $X\in \mathcal{H}$ has the following typical parameters:
$$
\overline{\mu}:=\hat{\mathbb{E}}[X],\quad \underline{\mu}:=-\hat{\mathbb{E}}[-X].
$$
The interval $[\underline{\mu},\overline{\mu}]$ characterizes the mean-uncertainty of $X$.
\end{remark}

\begin{definition}
(Independence) In a sublinear expectation space $(\Omega,\mathcal{H},\hat{\mathbb{E}})$, a random vector $Y\in \mathcal{H}^{n}$ is said to
be independent of another random vector $X\in \mathcal{H}^{m}$ under $\hat{\mathbb{E}}$, if for each test function $\varphi\in \emph{C}_{Lip}(\mathbb{R}^{m+n})$ we have
$$
\hat{\mathbb{E}}[\varphi(X,Y)]=\hat{\mathbb{E}}[\hat{\mathbb{E}}[\varphi(x,Y)]_{x=X}].
$$
\end{definition}

\begin{remark}
Under sublinear expectation, independence is not  symmetric: "$Y$ is independent of $X$" does not imply automatically that "$X$ is
independent of $Y$". Example 1.3.15 of \cite{peng2019nonlinear} provides such an example.
\end{remark}

\begin{definition}
(i.i.d. sequence) A sequence of random variables $\{X_{i}\}_{i=1}^{\infty}$ is said to be i.i.d., if for each $i=1,2,\ldots, X_{i+1}$ is identically distributed as $X_{1}$ and independent of $(X_{1},\ldots,X_{i})$.
\end{definition}

\begin{definition}
(Maximal distribution) A random variable $\eta$ on a sublinear expectation space $(\Omega,\mathcal{H},\hat{\mathbb{E}})$ is called maximal distributed if
$$
\hat{\mathbb{E}}[\varphi(\eta)]=\sup \limits_{\underline{\mu}\leq y \leq\overline{\mu}}\varphi(y),\qquad \forall \varphi\in \emph{C}_{Lip}(\mathbb{R}),
$$
where $\overline{\mu}=\hat{\mathbb{E}}[\eta] \text{ and } \underline{\mu}=-\hat{\mathbb{E}}[-\eta].$ This distribution is denoted by $\eta\overset{\text{d}}{=}M_{[\underline{\mu},\overline{\mu}]}$.
\end{definition}

\begin{theorem}
(Law of large numbers) Let $\{X_{i}\}_{i=1}^{\infty}$ be a sequence of i.i.d. random variables on a sublinear expectation space $(\Omega,\mathcal{H},\hat{\mathbb{E}})$. We assume that $\hat{\mathbb{E}}[|X_{1}|^{2}]<\infty$, then for any Lipschitz function $\varphi$,
$$
\lim_{n\rightarrow\infty} \hat{\mathbb{E}}[\varphi(\frac{1}{n}\sum_{i=1}^{n}X_{i})]=\sup \limits_{\underline{\mu}\leq y \leq\overline{\mu}}\varphi(y),
$$
where $\overline{\mu}=\hat{\mathbb{E}}[X_{1}] \text{ and } \underline{\mu}=-\hat{\mathbb{E}}[-X_{1}].$
\end{theorem}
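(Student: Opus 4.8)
The plan is to prove the convergence by the dynamic-programming (PDE) interpolation method: introduce the value function whose terminal value is the claimed limit, and then telescope along the partial sums using the independence structure of $\hat{\mathbb{E}}$. Throughout write $S_n=\sum_{i=1}^{n}X_i$.

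First I would reduce to a smooth test function. Both maps $\varphi\mapsto\hat{\mathbb{E}}[\varphi(S_n/n)]$ and $\varphi\mapsto\sup_{\underline{\mu}\le y\le\overline{\mu}}\varphi(y)$ are $1$-Lipschitz in the uniform norm of $\varphi$ (for the first one this is immediate from $|\hat{\mathbb{E}}[A]-\hat{\mathbb{E}}[B]|\le\hat{\mathbb{E}}[|A-B|]$, a consequence of monotonicity and sub-additivity). Hence by mollification I may assume $\varphi\in C_b^{\infty}(\mathbb{R})$ with bounded derivatives and Lipschitz constant preserved, and send the approximation error to $0$ at the end. Next introduce
$$
u(t,x)=\sup_{\underline{\mu}\le v\le\overline{\mu}}\varphi(x+vt),\qquad (t,x)\in[0,\infty)\times\mathbb{R},
$$
so that $u(0,x)=\varphi(x)$ and $u(1,0)=\sup_{\underline{\mu}\le v\le\overline{\mu}}\varphi(v)$ is exactly the target limit. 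An envelope computation shows that $u$ solves the first-order Hamilton--Jacobi equation $\partial_t u=g(\partial_x u)$ with $g(a):=\overline{\mu}\,a^{+}-\underline{\mu}\,a^{-}$, since maximizing over $v$ pushes the optimal drift to the upper edge $\overline{\mu}$ when $\partial_x u\ge 0$ and to the lower edge $\underline{\mu}$ otherwise.

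The core is a telescoping estimate over the discrete times $t_k=1-k/n$ and rescaled sums $S_k/n$. Because the i.i.d. assumption makes $X_{k+1}$ independent of $(X_1,\dots,X_k)$, the independence identity of the sublinear expectation yields
$$
\hat{\mathbb{E}}\!\left[u\!\left(t_{k+1},\tfrac{S_{k+1}}{n}\right)\right]=\hat{\mathbb{E}}\!\left[\Big(\hat{\mathbb{E}}\big[u(t_{k+1},a+\tfrac{X_{k+1}}{n})\big]\Big)_{a=S_k/n}\right].
$$
Expanding $u$ by Taylor in both arguments and taking the inner expectation over $X_{k+1}$, the first-order contribution is $-\tfrac1n\partial_t u+\tfrac1n\hat{\mathbb{E}}[\partial_x u\cdot X_{k+1}]=\tfrac1n\big(g(\partial_x u)-\partial_t u\big)=0$ by the PDE, where I used that for a constant $a$ one has $\hat{\mathbb{E}}[aX_1]=\overline{\mu}\,a^{+}-\underline{\mu}\,a^{-}=g(a)$. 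The remainder is $O(n^{-2})$ uniformly, controlled by a bounded second derivative of $u$ together with $\hat{\mathbb{E}}[|X_1|^{2}]<\infty$. Using $|\hat{\mathbb{E}}[A]-\hat{\mathbb{E}}[B]|\le\hat{\mathbb{E}}[|A-B|]$, each interpolation step perturbs the expectation by at most $C/n^{2}$, and summing over $k=0,\dots,n-1$ (with $u(0,S_n/n)=\varphi(S_n/n)$ and $u(1,0)=\sup_v\varphi(v)$) gives
$$
\Big|\hat{\mathbb{E}}\big[\varphi(S_n/n)\big]-\sup_{\underline{\mu}\le v\le\overline{\mu}}\varphi(v)\Big|\le\frac{C}{n}\longrightarrow 0.
$$

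The main obstacle is regularity: the value function $u(t,x)=\sup_v\varphi(x+vt)$ is in general only Lipschitz (indeed semiconcave) and need not be $C^2$ even for smooth $\varphi$, because the supremum creates corners, so the pointwise Taylor cancellation is not literally available. The rigorous remedy is to read $u$ as the unique viscosity solution of the Hamilton--Jacobi equation and to replace each Taylor step by touching $u$ from above and below with smooth test functions (equivalently, sandwiching it between smooth super- and sub-solutions); the scheme above is monotone and consistent, so convergence follows by a Barles--Souganidis-type argument, with the required comparison and regularity results drawn from \citet{peng2019nonlinear}. One then removes the mollification using the $1$-Lipschitz dependence on $\varphi$ established in the first step.
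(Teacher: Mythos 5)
Your proposal is sound, but there is nothing in the paper to compare it against: this theorem appears in Appendix~\ref{sec:A} as background material quoted without proof from the sublinear-expectation literature (the paper points to \cite{peng2019nonlinear} for a full account), so the relevant benchmark is the proofs in that literature. Measured against those, your argument is essentially the canonical one, and, importantly, you have isolated the one real difficulty correctly: the telescoping Taylor computation is not legitimate as written, because $u(t,x)=\sup_{\underline{\mu}\le v\le\overline{\mu}}\varphi(x+vt)$ has kinks even for smooth $\varphi$ (one small slip: a supremum of smooth functions is semiconvex, not semiconcave --- think of $|x|=\max\{x,-x\}$ --- but this is immaterial). The ingredients you assemble around it are all correct: the $1$-Lipschitz dependence on $\varphi$, the identification of $u$ as the viscosity solution of $u_t=g(u_x)$ with $g(a)=\overline{\mu}\,a^{+}-\underline{\mu}\,a^{-}$, the conditioning step via the independence identity (which, read properly, exhibits $\hat{\mathbb{E}}[\varphi(x+S_k/n)]$ as the $k$-fold iterate of the monotone operator $T_n\psi(x)=\hat{\mathbb{E}}[\psi(x+X_1/n)]$), the cancellation using $\hat{\mathbb{E}}[aX_1]=g(a)$ for constant $a$, and the $O(n^{-2})$ per-step remainder from $\hat{\mathbb{E}}[|X_1|^2]<\infty$. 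Your proposed repair --- monotonicity, stability and consistency of this scheme plus the comparison principle for the limit equation, i.e.\ a Barles--Souganidis argument --- is precisely the route taken by \cite{rokhlin2015central}, which the paper cites for the companion central limit theorem (Theorem~\ref{thm:1}). Peng's own proof in \cite{peng2019nonlinear} patches the same hole differently: it perturbs the data by a small independent zero-mean non-degenerate component so that the limiting PDE becomes uniformly parabolic, invokes Krylov's interior regularity to justify the pointwise Taylor telescoping, and then removes the perturbation using exactly the Lipschitz stability in $\varphi$ that you establish in your first step. Your route requires no regularity theory beyond comparison (and in fact makes your mollification step unnecessary, since the scheme's consistency is tested only against smooth test functions); Peng's route yields the quantitative $C/n$-type rate that your heuristic telescoping promises but the viscosity argument abandons.
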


\begin{definition}\label{def:8}
(G-normal distribution) Let $\mathcal{P}_{Z}$ be a set of probability measures defined on the space $(\Omega,\mathcal{F})$. A measurable
function $Z: \Omega \mapsto \mathbb{R}$ is said to follow a G-normal distribution with lower variance $\underline{\sigma}^{2}$ and upper variance $\overline{\sigma}^{2}$,  $0<\underline{\sigma}\leq\overline{\sigma}$, if for every Lipschitz function $\varphi$,
$$
\hat{\mathbb{E}}[\varphi(Z)]=\sup _{\bbP  \in \mathcal{P}_Z} E_P[\varphi(Z)]=\sup _{\bbP  \in \mathcal{P}_Z} \int_{\Omega} \varphi(Z) d \bbP =u(1,0 ; \varphi),
$$
where $\{u(t,x; \varphi):(t,x)\in [0,\infty)\times\mathbb{R}\}$ is the unique viscosity solution to the Cauchy problem,

$$
u_t=\frac{1}{2}\left(\bar{\sigma}^2\left(u_{x x}\right)^{+}-\underline{\sigma}^2\left(u_{x x}\right)^{-}\right), \quad u(0, x)=\varphi(x).
$$
In the above expression, $u_t=\partial u/\partial t$, $u_{xx}=\partial^{2} u/\partial x^{2}$, and the superscripts $+$ and $-$ denote the positive and negative parts respectively.
\end{definition}

\begin{remark}

\begin{enumerate}[(i)]
  \item For G-normally distributed $Z$, we can prove that $\hat{\mathbb{E}}[Z]=\hat{\mathbb{E}}[-Z]=0$. Therefore $Z$ has no mean-uncertainty.
  \item $\overline{\sigma}^{2}=\hat{\mathbb{E}}[Z^{2}],\ \underline{\sigma}^{2}=-\hat{\mathbb{E}}[-Z^{2}]$.
  \item When $Z$ obeys the G-normal distribution, we say $Z\sim\mathcal{N}(0,[\underline{\sigma}^{2},\overline{\sigma}^{2}])$.
  \item As expected, when $\underline{\sigma}=\overline{\sigma}=\sigma$, the G-normal distribution reduces to the classical normal distribution $\mathcal{N}(0,\sigma^{2})$.
\end{enumerate}
\end{remark}

\begin{theorem}
(Central limit theorem) Let $\{X_{i}\}_{i=1}^{\infty}$ be a sequence of i.i.d. random variables on a sublinear expectation space $(\Omega,\mathcal{H},\hat{\mathbb{E}})$. We assume that $\hat{\mathbb{E}}[X_{1}]=-\hat{\mathbb{E}}[-X_{1}]=\mu$ and $\hat{\mathbb{E}}[|X_{1}|^{3}]<\infty$. Then for any Lipschitz function $\varphi$,
$$
\lim_{n\rightarrow\infty} \hat{\mathbb{E}}[\varphi(\sqrt{n}(\bar{X}_{n}-\mu))]=u(1,0 ; \varphi),
$$
where $u(1,0 ; \varphi)$ is  given in Definition~\ref{def:8} with
$$
\overline{\sigma}^{2}=\hat{\mathbb{E}}[(X_{1}-\mu)^{2}],\ \underline{\sigma}^{2}=-\hat{\mathbb{E}}[-(X_{1}-\mu)^{2}].
$$
\end{theorem}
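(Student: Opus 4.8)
The plan is to follow the PDE-interpolation scheme that underlies Peng's central limit theorem. Without loss of generality I take $\mu=0$, write $S_n=\frac{1}{\sqrt n}\sum_{i=1}^n X_i$, and let $u(t,x)$ be the viscosity solution of the G-heat equation with datum $u(0,\cdot)=\varphi$, so that the target is $\hat{\mathbb{E}}[\varphi(S_n)]\to u(1,0;\varphi)$. The idea is to compare the discrete dynamics of the partial sums with the continuous flow generated by $u$. I would first prove the result for smooth bounded $\varphi$ (so that, granting PDE regularity, $u$ has continuous derivatives $u_t,u_x,u_{xx},u_{xxx}$ that are bounded uniformly on $[0,1]\times\mathbb{R}$), and then remove the smoothness by approximation.

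Introduce the partition $t_i=i/n$, $\delta=1/n$, the partial sums $S_i=\frac{1}{\sqrt n}\sum_{j\le i}X_j$, and the interpolating quantities $\Phi_i=\hat{\mathbb{E}}[u(1-t_i,S_i)]$, so that $\Phi_0=u(1,0;\varphi)$ and $\Phi_n=\hat{\mathbb{E}}[\varphi(S_n)]$; it then suffices to show $\Phi_n-\Phi_0\to0$. The heart of the argument is a one-step estimate. For a smooth $\psi$ and a point $x$, Taylor expansion gives $\psi(x+\sqrt\delta X)=\psi(x)+\sqrt\delta X\,\psi'(x)+\tfrac12\delta X^2\psi''(x)+R$ with $|R|\le C\delta^{3/2}|X|^3$. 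Since the increments satisfy $\hat{\mathbb{E}}[X]=-\hat{\mathbb{E}}[-X]=0$ and $\hat{\mathbb{E}}[X^2]=\overline\sigma^2$, $-\hat{\mathbb{E}}[-X^2]=\underline\sigma^2$, the linear term has vanishing sublinear expectation in both directions, while $\hat{\mathbb{E}}[\psi''(x)X^2]=\overline\sigma^2(\psi''(x))^+-\underline\sigma^2(\psi''(x))^-$. Combining these through sub-additivity (the two one-sided inequalities pin the linear-plus-quadratic part exactly because the linear part contributes zero on both sides) and the third-moment bound $\hat{\mathbb{E}}[|X|^3]<\infty$ yields
\[
\hat{\mathbb{E}}[\psi(x+\sqrt\delta X)]=\psi(x)+\tfrac12\delta\bigl(\overline\sigma^2(\psi''(x))^+-\underline\sigma^2(\psi''(x))^-\bigr)+O(\delta^{3/2}).
\]

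Applying this one-step estimate conditionally, with $\psi=u(1-t_{i-1},\cdot)$ at the $\mathcal F_{i-1}$-measurable point $x=S_{i-1}$ and the independent increment $X=X_i$, together with a first-order Taylor expansion of $u$ in time, I obtain
\[
\hat{\mathbb{E}}_{i-1}\!\left[u(1-t_i,S_i)\right]=u(1-t_{i-1},S_{i-1})+\delta\Bigl[\tfrac12\bigl(\overline\sigma^2(u_{xx})^+-\underline\sigma^2(u_{xx})^-\bigr)-u_t\Bigr]+O(\delta^{3/2}),
\]
where the bracket is evaluated at $(1-t_{i-1},S_{i-1})$. Because $u$ solves the G-heat equation, the bracket vanishes identically, so $\hat{\mathbb{E}}_{i-1}[u(1-t_i,S_i)]=u(1-t_{i-1},S_{i-1})+O(\delta^{3/2})$. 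Passing to the full expectation and using monotonicity and sub-additivity to carry the remainder through $\hat{\mathbb{E}}$ (bounding it in absolute value by $\hat{\mathbb{E}}[|R|]$) gives $|\Phi_i-\Phi_{i-1}|\le C\delta^{3/2}$, whence $|\Phi_n-\Phi_0|\le n\cdot Cn^{-3/2}=Cn^{-1/2}\to0$, which is the claim for smooth $\varphi$. A density argument then extends it to every Lipschitz $\varphi$: approximate $\varphi$ uniformly by smooth functions and use that both $\varphi\mapsto\hat{\mathbb{E}}[\varphi(S_n)]$ and $\varphi\mapsto u(1,0;\varphi)$ are non-expansive in the sup-norm, together with the stability of viscosity solutions.

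The hard part will be the PDE regularity that justifies the Taylor expansions: the interpolation only works if $u$ has bounded continuous derivatives up to $u_{xxx}$ (and $u_t$) uniformly on $[0,1]\times\mathbb{R}$, which requires interior Krylov/Schauder-type estimates for the fully nonlinear operator $a\mapsto\tfrac12(\overline\sigma^2 a^+-\underline\sigma^2 a^-)$, an operator that is merely Lipschitz in $u_{xx}$. A secondary technical point is the careful propagation of the $O(\delta^{3/2})$ errors through the nested sublinear conditional expectations, where only the one-sided inequalities of sub-additivity are available; this is controlled by summing the absolute remainders. These are precisely the steps developed in the references cited for Theorem~\ref{thm:1}.
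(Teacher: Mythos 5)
You should first note what the ``paper's own proof'' is here: the paper never proves this theorem. It is stated in Appendix~\ref{sec:A} as background from Peng's theory, and the corresponding result in the main text, Theorem~\ref{thm:1}, is attributed to \cite{peng2008new}, \cite{rokhlin2015central} and \citet[Theorem 4.1]{fang2019limit}. Your interpolation scheme is exactly the strategy of those references: partition $[0,1]$, follow $\Phi_i=\hat{\mathbb{E}}[u(1-t_i,S_i)]$, kill the linear term because $X_i$ has no mean uncertainty (so it splits off \emph{exactly}, not just by one-sided sub-additivity), identify $\hat{\mathbb{E}}[aX_i^2]=a^+\overline{\sigma}^2-a^-\underline{\sigma}^2$, and cancel against $u_t$ via the G-heat equation. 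All of that part of your proposal is correct and is the standard route.

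The genuine gap is the regularity you demand of $u$. You require $u_t,u_x,u_{xx},u_{xxx}$ bounded and continuous uniformly on $[0,1]\times\mathbb{R}$, and you propose to secure this by mollifying $\varphi$. Neither is available: the nonlinearity $G(a)=\tfrac12\bigl(\overline{\sigma}^2a^+-\underline{\sigma}^2a^-\bigr)$ has a kink at $a=0$, so third spatial derivatives of solutions generically jump across the level set $\{u_{xx}=0\}$ no matter how smooth the initial datum is; smoothing $\varphi$ does not remove an obstruction that comes from the operator. The paper itself exhibits the phenomenon: in the proof of Theorem~\ref{thm:3}, the explicit solution $u(t,x)=f\{(x-c)/\sqrt{t}\}$ has $f_{yy}$ continuous but $f_{yyy}$ discontinuous at $y=0$ whenever $\underline{\sigma}<\overline{\sigma}$. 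Consequently your one-step remainder bound $|R|\le C\delta^{3/2}|X|^3$, which needs a bounded third derivative along the whole segment, fails as stated, and the ``Krylov/Schauder estimates'' you invoke do not deliver $C^3$: what is true (Krylov, Evans--Krylov, using that $\underline{\sigma}>0$ makes the equation uniformly parabolic and that $G$ is convex) is interior $C^{1+\alpha/2,2+\alpha}$ regularity. That weaker regularity is in fact sufficient, and is what the cited proofs use: Taylor expansion with the H\"older-continuous $u_{xx}$ gives a one-step error of order $\delta^{1+\alpha/2}\bigl(1+\hat{\mathbb{E}}[|X_1|^{2+\alpha}]\bigr)$, which your third-moment hypothesis controls, and summation gives $O(n^{-\alpha/2})\to0$. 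Because these estimates are \emph{interior}, one must also stop the interpolation a fixed time $h>0$ before the terminal layer (your recursion runs all the way to $t_n=1$, silently using derivative bounds exactly where they degenerate) and then let $h\to0$ using the Lipschitz bound on $\varphi$ and the $\sqrt{h}$-continuity of $u$ in time. With these two repairs --- H\"older rather than $C^3$ regularity, and the terminal time shift --- your argument becomes the proof in the literature; the closing sup-norm density step is then superfluous, since the regularity comes from the PDE and the argument applies to Lipschitz $\varphi$ directly.
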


\section{Two variants of the significance test}\label{sec:B}

In Section~\ref{sec:2}, we proposed a robust procedure for a typical form of the classical univariate significance test.
Here, we address two variants of the test  under sublinear expectation. All technical proofs are omitted due to space limitations, which are similar to those given in Sections~\ref{sec:2} and \ref{sec:3}.

\subsection{First variant of the significance test}
Suppose that the observed data sequence $X_{1},\ldots,X_{n}$ is generated by the data generation process [*]  in Section~\ref{sec:2}, and we are interested in testing about their common mean $\mu$:
\begin{equation}
(\tII)\quad
  \quad H_{0}:\mu\geq\mu_{0}  \quad \quad \text{versus} \quad \quad H_{1}:\mu<\mu_{0},
\end{equation}
where $\mu_{0}$ is a prespecified constant.

When $\mu=\mu_{0}$, according to Theorem \ref{thm:1}, we have
\begin{equation}\label{eq:11}
\begin{split}
p_{2}(c ; \underline{\sigma}, \overline{\sigma}):&=\lim _{n \rightarrow \infty} \sup _{\left\{\sigma_{i}\right\} \in \Sigma(\underline{\sigma}, \overline{\sigma})} \bbE \left[\mathbbm{1}\left(\sqrt{n} (\bar{X}_{n}-\mu_{0})<c\right)\right] \\
 &=\frac{2}{\overline{\sigma}+\underline{\sigma}} \int_{-\infty}^{c}\{\phi(z / \overline{\sigma}) \mathbbm{1}(z \leq 0)+\phi(z / \underline{\sigma}) \mathbbm{1}(z>0)\} d z.
\end{split}
\end{equation}
For any $0<\underline{\sigma}\leq\overline{\sigma}<\infty$, it is easy to see that $p_{2}(c ; \underline{\sigma}, \overline{\sigma})$ is a monotonically increasing function of $c$ with
$$
\lim _{c \rightarrow -\infty}p_{2}(c ; \underline{\sigma}, \overline{\sigma})=0 \qquad\text{and}\qquad \lim _{c \rightarrow \infty}p_{2}(c ; \underline{\sigma}, \overline{\sigma})=1.
$$
For a given significance level $\alpha\ (0<\alpha<0.5)$, let $p_{2}(c ; \underline{\sigma}, \overline{\sigma})=\alpha$, i.e.,
$$
\frac{2}{\overline{\sigma}+\underline{\sigma}} \int_{-\infty}^{c}\phi(z / \overline{\sigma}) d z=\alpha.
$$
It follows that
$$
c=\overline{\sigma}\Phi^{-1}\left(\frac{\alpha(\overline{\sigma}+\underline{\sigma})}
{2\overline{\sigma}}\right).
$$
The robust rejection region is given as follows:
\begin{equation}
\label{WIIG}
W_{\tII}^{G}=\{\sqrt{n} (\bar{X}_{n}-\mu_{0})<c_{2}\},\ \text{with }\ c_{2}=\overline{\sigma}\Phi^{-1}\left(\frac{\alpha(\overline{\sigma}
+\underline{\sigma})}{2\overline{\sigma}}\right).
\end{equation}
While the traditional rejection region is
\begin{equation}
\label{WII}
W_{{\tII}}=\{\sqrt{n} (\bar{X}_{n}-\mu_{0})<S_{n}\Phi^{-1}(\alpha)\}.
\end{equation}

Suppose that there are two samples with different variances but the same mean, the first sample $\{W_{1i}\}_{i=1}^{n_{1}}$ obeys $\mathcal{N}(\mu,\overline{\sigma}^{2})$ and the second sample $\{W_{2i}\}_{i=1}^{n_{2}}$ obeys $\mathcal{N}(\mu,\underline{\sigma}^{2})$. Let $n=\min\{n_{1},n_{2}\}$,  for a given constant $c$ the  experimenter strategically chooses $Z_i=W_{1i}$ or $Z_i=W_{2i}$ at the $i$th stage to maximize the probability of $\{\sqrt{n} (\bar{Z}_{n}-\mu_{0})<c\}$ when $\mu=\mu_{0}$ (i.e., maximize the actual type I error rate). Until the final stage $n$, the experimenter follows a sequential strategy $\theta=(\sigma_{1},\ldots,\sigma_{n})$, where $\sigma_{i}=\overline{\sigma}$ indicates $Z_i=W_{1i}$ and $\sigma_{i}=\underline{\sigma}$ indicates $Z_i=W_{2i}$. That is,
$$
Z_i=\left\{\begin{array}{l}
W_{1i}, \text { if } \sigma_{i}=\overline{\sigma}, \\[1mm]
W_{2i}, \text { if } \sigma_{i}=\underline{\sigma},
\end{array}\right.
$$
where the $\{\sigma_{i}\}_{i=1}^{n}$ are practically identified using historical information.

The next theorem shows the optimal manipulation strategy $\theta^*$ the experimenter will take to maximize the probability of rejecting the null hypothesis when the data is actually generated under the null hypothesis.

\begin{theorem}\label{thm:8}
(Optimal manipulation strategy $\theta^*$ ) For a given constant $c$, we can construct the asymptotically optimal strategy $\theta^*$ to maximize the probability of $\{\sqrt{n} (\bar{Z}_{n}-\mu_{0})<c\}$ when $\mu=\mu_{0}$ (i.e., attain the supremum in \eqref{eq:11}) as follows: for $i\geq2$,
\begin{equation}
\begin{array}{rlrl}
\sigma_{i}=\overline{\sigma}, & & \text { if }\ \frac{\xi_{i-1}}{\sqrt{n}} \geq c, \\
\sigma_{i}=\underline{\sigma}, & & \text { if }\  \frac{\xi_{i-1}}{\sqrt{n}} <c, \\
\end{array}
\end{equation}
where $\xi_i =\sum_{j=1}^i(Z_{j}-\mu_{0})$ and $\sigma_{1}$ can be chosen randomly.
\end{theorem}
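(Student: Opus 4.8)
The plan is to reproduce the argument of Theorem~\ref{thm:3} with the upper-tail indicator $\mathbbm{1}(x>c)$ replaced by the lower-tail indicator $\varphi(x)=\mathbbm{1}(x<c)$, and to track how reflecting the test function flips the sign condition that selects $\overline{\sigma}$ versus $\underline{\sigma}$. First I would invoke the characterization recalled from the Appendix of \cite{peng2020hypothesis}: the asymptotically optimal strategy attaining the supremum in \eqref{eq:11} is, for $i\geq 2$, to take $\sigma_i=\overline{\sigma}$ when $u_{xx}\left(1-(i-1)/n,\ \xi_{i-1}/\sqrt{n}\right)\geq 0$ and $\sigma_i=\underline{\sigma}$ otherwise, where $u$ is the viscosity solution of the G-heat equation \eqref{eq:4} with initial datum $\varphi$. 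Because $\varphi$ is merely Borel-measurable, this step relies on the extension of Theorem~\ref{thm:1} to indicator functions noted right after its statement.

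Next I would solve \eqref{eq:4} explicitly for $\varphi(x)=\mathbbm{1}(x<c)$. By the scaling invariance of the equation the solution is self-similar, $u(t,x)=g\left((x-c)/\sqrt{t}\right)$, and either a direct computation or the substitution $z\mapsto -z$ in the integral defining $p_2$ identifies
\[
g(y)=\frac{2}{\overline{\sigma}+\underline{\sigma}}\int_{y}^{\infty}\{\phi(z/\overline{\sigma})\,\mathbbm{1}(z\geq 0)+\phi(z/\underline{\sigma})\,\mathbbm{1}(z<0)\}\,dz,
\]
which is exactly the reflection $g(y)=f(-y)$ of the profile $f$ appearing in the proof of Theorem~\ref{thm:3}. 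In particular $u(1,0)=g(-c)=p_2(c;\underline{\sigma},\overline{\sigma})$, consistent with \eqref{eq:11}.

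Differentiating the self-similar form gives $u_{xx}(t,x)=t^{-1}g_{yy}\left((x-c)/\sqrt{t}\right)$ with
\[
g_{yy}(y)=\frac{2y}{\overline{\sigma}+\underline{\sigma}}\left\{\frac{1}{\overline{\sigma}^{2}}\phi(y/\overline{\sigma})\,\mathbbm{1}(y\geq 0)+\frac{1}{\underline{\sigma}^{2}}\phi(y/\underline{\sigma})\,\mathbbm{1}(y<0)\right\}.
\]
Since the braced factor is strictly positive, $g_{yy}(y)\geq 0$ is equivalent to $y\geq 0$, so that $u_{xx}(t,x)\geq 0$ is equivalent to $x\geq c$ --- precisely the reversal of the inequality found in the upper-tail case. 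Substituting $x=\xi_{i-1}/\sqrt{n}$ into the selection rule then yields $\sigma_i=\overline{\sigma}$ iff $\xi_{i-1}/\sqrt{n}\geq c$ and $\sigma_i=\underline{\sigma}$ iff $\xi_{i-1}/\sqrt{n}<c$, which is the claimed strategy; $\sigma_1$ may be chosen freely, as the single initial term $n^{-1/2}(Z_1-\mu_0)$ is asymptotically negligible.

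I expect the only delicate point to be the sign bookkeeping. The negative-part term in \eqref{eq:4} makes the equation genuinely nonlinear, so one cannot simply set $u=1-\tilde u$ with $\tilde u$ the upper-tail solution of Theorem~\ref{thm:3}; instead I would either verify directly that the self-similar profile $g$ solves \eqref{eq:4}, or observe that $x\mapsto-x$ is a symmetry of \eqref{eq:4}, so that the lower-tail problem at threshold $c$ is the spatial reflection of an upper-tail problem at threshold $-c$, which immediately yields $g(y)=f(-y)$ and hence the flipped sign of $u_{xx}$. Once the sign of $g_{yy}$ is pinned down the threshold rule is immediate, and the $k$-sample extension is identical to the remark following Theorem~\ref{thm:3}.
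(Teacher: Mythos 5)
Your proposal is correct and is essentially the proof the paper intends: Appendix~\ref{sec:B} omits the argument, stating only that it is ``similar to those given in Sections~\ref{sec:2} and \ref{sec:3},'' and you carry out exactly that adaptation of the proof of Theorem~\ref{thm:3} --- same strategy characterization via the sign of $u_{xx}$, same self-similar solution of \eqref{eq:4} with the reflected profile $g(y)=f(-y)$, leading to the flipped threshold rule. Your observation that one cannot pass from the upper-tail to the lower-tail solution via $u=1-\tilde u$ (because the G-heat equation is nonlinear) and must instead use the spatial reflection symmetry is accurate and is the one genuinely delicate point; your handling of it is sound.
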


The following proposition shows that under such manipulated data,  the traditional procedure with rejection region $W_{{\tII}}$~\eqref{WII} will inflate the rejection probability.

\begin{proposition}
For a given nominal significance level $\alpha\ (0<\alpha<0.5)$, if the experimenter chooses data $Z_{1},\ldots,Z_{n}$ as in Theorem \ref{thm:8} with $c=\bar{\sigma} \Phi^{-1}\left(\alpha\right)$, then the experimenter can increase the actual type I error rate of the traditional rejection region $W_{{\tII}}$~\eqref{WII} to a value larger than $2\alpha\overline{\sigma}/(\underline{\sigma}+\overline{\sigma})$.
\end{proposition}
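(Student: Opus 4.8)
The plan is to mirror the proof of Proposition~\ref{prop:1} for the lower-tailed test~$(\tII)$, exploiting the fact that the only structural change is the sign of the normal quantile: here $\Phi^{-1}(\alpha)<0$ because $0<\alpha<0.5$, whereas in Proposition~\ref{prop:1} the quantile $\Phi^{-1}(1-\alpha)$ was positive. I would fix $\mu=\mu_0$ throughout (the boundary of $H_0$) and let the experimenter generate $Z_1,\dots,Z_n$ by the optimal manipulation rule of Theorem~\ref{thm:8} taken at the threshold $c=\bar\sigma\Phi^{-1}(\alpha)$. The target is to bound from below the actual rejection probability of $W_{\tII}$ in~\eqref{WII} by the quantity $p_2\bigl(\bar\sigma\Phi^{-1}(\alpha);\underline\sigma,\bar\sigma\bigr)$ and then to evaluate this tail probability explicitly.

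The argument proceeds in three steps. First, because the chosen strategy is exactly the one that attains the supremum in~\eqref{eq:11} at the fixed constant $c=\bar\sigma\Phi^{-1}(\alpha)$, Theorem~\ref{thm:8} together with~\eqref{eq:11} gives
$$
\lim_{n\to\infty}\bbE\bigl[\mathbbm{1}\bigl(\sqrt{n}(\bar Z_n-\mu_0)<\bar\sigma\Phi^{-1}(\alpha)\bigr)\bigr]=p_2\bigl(\bar\sigma\Phi^{-1}(\alpha);\underline\sigma,\bar\sigma\bigr).
$$
Second, I would replace the fixed threshold by the data-dependent one $S_n\Phi^{-1}(\alpha)$ appearing in $W_{\tII}$. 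As in Proposition~\ref{prop:1}, the pooled sample being a mixture of subsamples with variances in $[\underline\sigma^2,\bar\sigma^2]$ forces $S_n\le\bar\sigma$ almost everywhere for $n$ large (via the averaging of Lemma~\ref{lem:1}). Multiplying this inequality by the \emph{negative} number $\Phi^{-1}(\alpha)$ reverses it, so $S_n\Phi^{-1}(\alpha)\ge\bar\sigma\Phi^{-1}(\alpha)$ a.e., whence the event inclusion
$$
\bigl\{\sqrt{n}(\bar Z_n-\mu_0)<\bar\sigma\Phi^{-1}(\alpha)\bigr\}\subseteq\bigl\{\sqrt{n}(\bar Z_n-\mu_0)<S_n\Phi^{-1}(\alpha)\bigr\}=W_{\tII}.
$$
Taking expectations and passing to the limit yields $\lim_{n\to\infty}\bbP(W_{\tII})\ge p_2\bigl(\bar\sigma\Phi^{-1}(\alpha);\underline\sigma,\bar\sigma\bigr)$.

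Third, I would compute the right-hand side. Since $\bar\sigma\Phi^{-1}(\alpha)<0$, only the branch $\mathbbm{1}(z\le0)$ of the integrand in~\eqref{eq:11} contributes, so the integral reduces to $\frac{2}{\bar\sigma+\underline\sigma}\int_{-\infty}^{\bar\sigma\Phi^{-1}(\alpha)}\phi(z/\bar\sigma)\,dz$; the substitution $z=\bar\sigma w$ turns this into $\frac{2\bar\sigma}{\bar\sigma+\underline\sigma}\Phi(\Phi^{-1}(\alpha))=2\alpha\bar\sigma/(\underline\sigma+\bar\sigma)$, which exceeds $\alpha$ unless $\underline\sigma=\bar\sigma$. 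This gives the claimed lower bound on the actual type~I error.

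The step I expect to require the most care is the second one, namely the correct handling of the sign reversal: because $\Phi^{-1}(\alpha)$ is negative the inequality $S_n\le\bar\sigma$ must be flipped, and this is exactly what makes the fixed-threshold event a \emph{subset} of $W_{\tII}$ rather than a superset, so that the bound points in the useful direction. The companion fact $S_n\le\bar\sigma$ a.e. is not entirely free either and would be justified, as in Proposition~\ref{prop:1}, from the law-of-large-numbers behavior of the pooled variance, whose limiting value is bounded above by $\bar\sigma^2$. Everything else is a direct transcription of the upper-tailed argument.
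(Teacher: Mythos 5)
Your proposal is correct and is exactly the argument the paper intends: the appendix proofs are omitted as ``similar to those given in Sections~2 and~3,'' and yours is the faithful transcription of the proof of Proposition~\ref{prop:1} to the lower-tailed test, with the one genuinely new point---that $\Phi^{-1}(\alpha)<0$ flips the inequality $S_n\le\bar\sigma$ into $S_n\Phi^{-1}(\alpha)\ge\bar\sigma\Phi^{-1}(\alpha)$, so the fixed-threshold event is contained in $W_{\tII}$---handled correctly. The evaluation $p_2\bigl(\bar\sigma\Phi^{-1}(\alpha);\underline\sigma,\bar\sigma\bigr)=2\alpha\bar\sigma/(\underline\sigma+\bar\sigma)$ is also right, so nothing is missing.
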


The subsequent proposition demonstrates that the robust rejection region $W_{{\tII}}^{G}$ can effectively control the type I error rate.

\begin{proposition}
For a given nominal significance level $\alpha\ (0<\alpha<0.5)$, even if the experimenter chooses data $Z_{1},\ldots,Z_{n}$ according to Theorem \ref{thm:8} with $c=$ $\overline{\sigma}\Phi^{-1}\left(\frac{\alpha(\overline{\sigma}+\underline{\sigma})}{2\overline{\sigma}}\right)$, the robust rejection region $W_{{\tII}}^{G}$~\eqref{WIIG} given by the sublinear expectation can ensure that the actual type I error rate equals $\alpha$.
\end{proposition}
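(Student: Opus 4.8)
The plan is to mirror the proof of Proposition~\ref{prop:2} verbatim, exploiting the perfect symmetry between the two one-sided tests under the reflection $x\mapsto -x$, $c\mapsto -c$. Here the robust rejection region is $W_{\tII}^{G}=\{\sqrt{n}(\bar{Z}_n-\mu_0)<c_2\}$ with $c_2=\overline{\sigma}\Phi^{-1}\!\big(\alpha(\overline{\sigma}+\underline{\sigma})/(2\overline{\sigma})\big)$, and the experimenter selects the data using the optimal manipulation strategy of Theorem~\ref{thm:8} with the threshold $c=c_2$ set equal to the critical value of the test itself. The whole argument reduces to two observations: that this strategy realizes the worst-case probability $p_2$, and that $c_2$ was calibrated so that this worst-case probability is exactly $\alpha$.

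First I would note that the manipulated sequence $\{\sigma_i\}$ produced by Theorem~\ref{thm:8} is a legitimate element of $\Sigma(\underline{\sigma},\overline{\sigma})$: each $\sigma_i$ depends only on $\xi_{i-1}=\sum_{j=1}^{i-1}(Z_j-\mu_0)$, hence is predictable with respect to the natural filtration, and it takes values in $\{\underline{\sigma},\overline{\sigma}\}$. More importantly, by the stochastic-control representation underlying Theorem~\ref{thm:8}—the exact analogue of Theorem~\ref{thm:3}, based on the Appendix of \cite{peng2020hypothesis}, where the sign of $u_{xx}$ for the solution $u$ of the G-heat equation \eqref{eq:4} dictates the optimal choice of $\sigma_i$—this particular strategy is \emph{asymptotically optimal} for the event $\{\sqrt{n}(\bar{Z}_n-\mu_0)<c_2\}$ under $\mu=\mu_0$. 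Consequently, it attains the supremum over $\Sigma(\underline{\sigma},\overline{\sigma})$ appearing in the definition \eqref{eq:11} of $p_2$, so that when $\mu=\mu_0$,
\[
\lim_{n\to\infty}\bbE\left[\mathbbm{1}\left(\sqrt{n}(\bar{Z}_n-\mu_0)<c_2\right)\right]=p_2(c_2;\underline{\sigma},\overline{\sigma}).
\]

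Second, I would invoke the construction of $c_2$: it is defined precisely as the solution of $p_2(c;\underline{\sigma},\overline{\sigma})=\alpha$, and since $p_2(\cdot;\underline{\sigma},\overline{\sigma})$ is strictly increasing from $0$ to $1$ (as recorded just after \eqref{eq:11}), this solution is unique and equals $c_2$. Substituting gives $p_2(c_2;\underline{\sigma},\overline{\sigma})=\alpha$, whence the actual asymptotic type I error rate under the p-hacked data is exactly $\alpha$. I do not anticipate a genuine obstacle, since this is the reflected copy of Proposition~\ref{prop:2}; the only step warranting care is verifying that Theorem~\ref{thm:8}'s strategy truly \emph{achieves} the supremum in \eqref{eq:11} rather than merely being feasible. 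Once that optimality is granted from the viscosity-solution argument, the displayed chain of equalities is immediate and the conclusion follows with no further estimation.
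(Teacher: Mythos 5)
Your proposal is correct and follows exactly the route the paper intends: the paper omits the proofs in Appendix~\ref{sec:B}, stating they are similar to those of Sections~\ref{sec:2} and \ref{sec:3}, and your argument is precisely the mirror image of the paper's proof of Proposition~\ref{prop:2} --- the manipulation strategy of Theorem~\ref{thm:8} with threshold $c=c_2$ attains the supremum defining $p_2$ in \eqref{eq:11}, and $c_2$ is calibrated so that $p_{2}(c_2;\underline{\sigma},\overline{\sigma})=\alpha$. Your added remarks on predictability of $\{\sigma_i\}$ and on the need to verify optimality (rather than mere feasibility) of the strategy are sound and, if anything, slightly more careful than the paper's own treatment.
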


\subsection{Second variant of the significance test}
Suppose that the observed data sequence $X_{1},\ldots,X_{n}$ is generated by the data generation process [*] defined  in
Section~\ref{sec:2}, and we are interested in the following significance test  about the common mean $\mu$:

\begin{equation}
({\tIII}) \qquad H_{0}:\mu=\mu_{0}  \qquad \text{versus} \qquad
H_{1}:\mu\neq\mu_{0},
\end{equation}
where $\mu_{0}$ is a prespecified constant.

When $\mu=\mu_{0}$, by Corollary 4 of \cite{peng2020hypothesis}, we have
\begin{equation}\label{eq:14}
\begin{split}
p_{3}(c ; \underline{\sigma}, \overline{\sigma}):&=\lim _{n \rightarrow \infty} \sup _{\left\{\sigma_{i}\right\} \in \Sigma(\underline{\sigma}, \overline{\sigma})} \bbE \left[\mathbbm{1}\left(\sqrt{n} |\bar{X}_{n}-\mu_{0}|>c\right)\right] \\
 &\approx2p_{1}(c ; \underline{\sigma}, \overline{\sigma}).
\end{split}
\end{equation}
For a given significance level $\alpha\ (0<\alpha<0.5)$, let $p_{3}(c ; \underline{\sigma}, \overline{\sigma})=\alpha$, we have the following robust rejection region
\begin{equation}
    \label{WIIIG}
W_{{\tIII}}^{G}=\{\sqrt{n} |\bar{X}_{n}-\mu_{0}|>c_{3}\},\ \text{with }\ c_{3}=\overline{\sigma}\Phi^{-1}\left(1-\frac{\alpha(\overline{\sigma}
+\underline{\sigma})}{4\overline{\sigma}}\right).
\end{equation}
While the traditional rejection region is
\begin{equation}
    \label{WIII}
W_{{\tIII}}=\{\sqrt{n} |\bar{X}_{n}-\mu_{0}|>S_{n}\Phi^{-1}(1-\alpha/2)\}.
\end{equation}

Assume that there are two samples with different variances but the same mean, the first sample $\{W_{1i}\}_{i=1}^{n_{1}}$ obeys $\mathcal{N}(\mu,\overline{\sigma}^{2})$ and the second sample $\{W_{2i}\}_{i=1}^{n_{2}}$ obeys $\mathcal{N}(\mu,\underline{\sigma}^{2})$. Let $n=\min\{n_{1},n_{2}\}$,  for a given constant $c$ the  experimenter strategically chooses $Z_i=W_{1i}$ or $Z_i=W_{2i}$ at the $i$th stage to maximize the probability of $\{\sqrt{n} |\bar{Z}_{n}-\mu_{0}|>c\}$ when $\mu=\mu_{0}$ (i.e., maximize the actual type I error rate). Until the final stage $n$, the experimenter follows a sequential strategy $\theta=(\sigma_{1},\ldots,\sigma_{n})$, where $\sigma_{i}=\overline{\sigma}$ indicates $Z_i=W_{1i}$ and $\sigma_{i}=\underline{\sigma}$ indicates $Z_i=W_{2i}$. That is,
$$
Z_i=\left\{\begin{array}{l}
W_{1i}, \text { if } \sigma_{i}=\overline{\sigma}, \\[1mm]
W_{2i}, \text { if } \sigma_{i}=\underline{\sigma},
\end{array}\right.
$$
where the $\{\sigma_{i}\}_{i=1}^{n}$ are practically identified using historical information.

The subsequent theorem reveals the optimal manipulation strategy $\theta^*$ the experimenter will take to maximize the probability of rejecting the null hypothesis when the data is actually generated under the null hypothesis.
\begin{theorem}\label{thm:9}
(Optimal manipulation strategy $\theta^*$ ) For a given constant $c$, we can construct the asymptotically optimal strategy $\theta^*$ to maximize the probability of $\{\sqrt{n} |\bar{Z}_{n}-\mu_{0}|>c\}$ when $\mu=\mu_{0}$ (i.e., attain the supremum in \eqref{eq:14}) as follows: for $i\geq2$,
\begin{equation}
\begin{array}{rlrl}
\sigma_{i}=\overline{\sigma}, & & \text { if }\ \frac{|\xi_{i-1}|}{\sqrt{n}}\leq c, \\
\sigma_{i}=\underline{\sigma}, & & \text { if }\  \frac{|\xi_{i-1}|}{\sqrt{n}} >c, \\
\end{array}
\end{equation}
where $\xi_i =\sum_{j=1}^i(Z_{j}-\mu_{0})$ and $\sigma_{1}$ can be chosen randomly.
\end{theorem}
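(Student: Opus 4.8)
The plan is to follow, almost verbatim, the structure of the proof of Theorem~\ref{thm:3}, replacing the one-sided test function $\varphi(x)=\mathbbm{1}(x>c)$ by the two-sided datum $\varphi(x)=\mathbbm{1}(|x|>c)$. First I would invoke the general principle established in the Appendix of \cite{peng2020hypothesis}: the asymptotically optimal volatility strategy attaining the supremum defining $p_3$ in \eqref{eq:14} selects, at each step $i\ge2$, $\sigma_i=\overline\sigma$ when $u_{xx}\bigl(1-(i-1)/n,\ \xi_{i-1}/\sqrt n\bigr)\ge0$ and $\sigma_i=\underline\sigma$ otherwise, where $u$ is the viscosity solution of the G-heat equation \eqref{eq:4} with initial condition $\varphi$. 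Thus the whole problem reduces to locating the sign of $u_{xx}(t,x)$, and $\sigma_1$ will be irrelevant since it enters only a single, asymptotically negligible term.

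Next I would exploit the evenness of the datum: since $\varphi(-x)=\varphi(x)$ and the operator in \eqref{eq:4} is invariant under $x\mapsto-x$, uniqueness of the viscosity solution forces $u(t,x)=u(t,-x)$, so it suffices to analyze $u_{xx}$ for $x\ge0$. To get an explicit handle I would use the two-tail splitting underlying the relation \eqref{eq:14}, namely $\mathbbm{1}(|x|>c)=\mathbbm{1}(x>c)+\mathbbm{1}(x<-c)$, and represent the solution through the one-sided self-similar profile $f(\cdot\,;\underline\sigma,\overline\sigma)$ already computed in the proof of Theorem~\ref{thm:3}. Concretely, $u(t,x)\simeq f\bigl((x-c)/\sqrt t\bigr)+f\bigl((-x-c)/\sqrt t\bigr)$, so that $u_{xx}(t,x)\simeq t^{-1}\bigl[f_{yy}\bigl((x-c)/\sqrt t\bigr)+f_{yy}\bigl((-x-c)/\sqrt t\bigr)\bigr]$.

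I would then read off the sign from the formula $f_{yy}(y)=\tfrac{-2y}{\overline\sigma+\underline\sigma}\{\overline\sigma^{-2}\phi(y/\overline\sigma)\mathbbm{1}(y\le0)+\underline\sigma^{-2}\phi(y/\underline\sigma)\mathbbm{1}(y>0)\}$ derived earlier, which has the sign of $-y$. The first term on the right is nonnegative precisely when $x\le c$ and the second precisely when $x\ge -c$; hence on the strip $|x|\le c$ both are nonnegative and $u_{xx}\ge0$, whereas for $x>c$ (respectively $x<-c$) the term whose argument is near zero dominates and is negative, giving $u_{xx}<0$. Substituting $u_{xx}\ge0\iff|x|\le c$ into the selection rule with $x=\xi_{i-1}/\sqrt n$ yields exactly the claimed strategy $\sigma_i=\overline\sigma$ on $\{|\xi_{i-1}|/\sqrt n\le c\}$ and $\sigma_i=\underline\sigma$ on $\{|\xi_{i-1}|/\sqrt n> c\}$.

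The main obstacle is that, unlike the one-sided case where self-similarity pins the inflection point of $u$ exactly at $x=c$ for every $t$, the two jumps at $\pm c$ interact, so the decomposition $u\simeq f+f$ is only asymptotically exact. The delicate point is therefore to verify that this cross-interaction does not displace the sign-change locus of $u_{xx}$ away from $|x|=c$ in the regions that matter. I would control it in the same way the approximation $p_3\simeq2p_1$ in \eqref{eq:14} is justified: for the relevant $c>0$ the contribution of the \emph{far} tail at each point is of strictly smaller order than that of the \emph{near} tail, so it cannot overturn the sign established above. This confirms that the boundary of $\{u_{xx}\ge0\}$ is $|x|=c$ and completes the argument.
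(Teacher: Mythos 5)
The paper never writes out a proof of Theorem~\ref{thm:9}: Appendix~\ref{sec:B} states that all proofs there are omitted and ``similar'' to those of the earlier sections, so the benchmark is the proof of Theorem~\ref{thm:3}, and your plan is precisely the intended adaptation --- invoke the selection principle from the Appendix of \cite{peng2020hypothesis} (sign of $u_{xx}$ decides $\overline\sigma$ versus $\underline\sigma$), use symmetry of the datum, and locate the convexity region. The problem is that your decisive verification step --- ``the contribution of the far tail at each point is of strictly smaller order than that of the near tail, so it cannot overturn the sign'' --- is false exactly where it matters, namely at and near the switching boundary $|x|=c$. Evaluate your superposition at $x=c$: the near term $f_{yy}\bigl((x-c)/\sqrt t\bigr)$ \emph{vanishes} there (it is proportional to $-(x-c)$), while the far term equals $\tfrac{2(x+c)}{\sqrt t\,(\overline\sigma+\underline\sigma)\,\overline\sigma^{2}}\,\phi\bigl((x+c)/(\sqrt t\,\overline\sigma)\bigr)>0$. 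So the approximate solution has $u_{xx}(t,c)>0$ strictly, and by continuity $u_{xx}>0$ on a whole interval $(c,x^{*}(t))$; the sign-change locus sits strictly outside $|x|=c$, and its displacement depends on $t$ and $c$ but \emph{not} on $n$, so it does not vanish in the limit. (A second instance of the same failure: when $\underline\sigma<\overline\sigma$ the near tail carries the lighter Gaussian $\phi(\cdot/\underline\sigma)$, which decays faster than the far tail's $\phi(\cdot/\overline\sigma)$, so the positive far term dominates again for all large $|x|$.) Hence the equivalence $u_{xx}\ge 0\iff|x|\le c$ that you substitute into the selection rule is not true even for your own approximate solution, and the rule switching exactly at $|x|=c$ is not the exact maximizer of the prelimit probability.

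What rescues the theorem --- and what a correct write-up must say explicitly --- is that the claim is only meaningful up to the approximation already built into \eqref{eq:14}: the two jumps of $\mathbbm{1}(|x|>c)$ interact through the nonlinearity of \eqref{eq:4}, so neither the value ($p_3\approx 2p_1$, note the ``$\approx$'') nor the switching curve is exact. The regions where your sign determination fails, $c<|x|<x^{*}(t)$ and the far tails, are visited with probability of the same exponentially small order in $c^{2}$ as the error in $p_{3}\approx 2p_{1}$, so the strategy of Theorem~\ref{thm:9} attains the supremum only up to that error. A sound proof should therefore either compare the value achieved by the $c$-switching strategy directly with $2p_{1}(c;\underline\sigma,\overline\sigma)$ and bound the discrepancy, or state the optimality claim with this tolerance; as written, your final paragraph asserts a pointwise domination that is simply wrong at the boundary, which is a genuine gap rather than a presentational one.
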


In the next proposition, we uncover that the traditional rejection region $W_{{\tIII}}$~\eqref{WIII} is ineffective in controlling the type I error rate.

\begin{proposition}
For a given nominal significance level $\alpha\ (0<\alpha<0.5)$, if the experimenter chooses data $Z_{1},\ldots,Z_{n}$ as in Theorem \ref{thm:9} with $c=\bar{\sigma} \Phi^{-1}\left(1-\alpha/2\right)$, then the experimenter can increase the actual type I error rate of the traditional rejection region $W_{{\tIII}}$~\eqref{WIII} to a value larger than  $2\alpha\overline{\sigma}/(\underline{\sigma}+\overline{\sigma})$.
\end{proposition}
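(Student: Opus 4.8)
The plan is to mirror the proof of Proposition~\ref{prop:1}, adapting it to the two-sided rejection region $W_{\tIII}$. The starting point is that, once the experimenter fixes the optimal strategy $\theta^*$ from Theorem~\ref{thm:9} with the target constant $c=\overline{\sigma}\,\Phi^{-1}(1-\alpha/2)$, the resulting data sequence $Z_1,\ldots,Z_n$ determines a single probability measure, so that under $\mu=\mu_0$ the actual asymptotic type I error rate of the traditional test is
\[
\lim_{n\to\infty}\bbE\!\left[\mathbbm{1}\!\left(\sqrt{n}\,|\bar{Z}_n-\mu_0|>S_n\Phi^{-1}(1-\alpha/2)\right)\right].
\]
The goal is to bound this limit from below by $2\alpha\overline{\sigma}/(\underline{\sigma}+\overline{\sigma})$.

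First I would replace the random threshold $S_n$ by the deterministic upper bound $\overline{\sigma}$. Since $0<\alpha<0.5$ gives $\Phi^{-1}(1-\alpha/2)>0$, and since $S_n\le\overline{\sigma}$ almost everywhere for all sufficiently large $n$ (the sample standard deviation cannot asymptotically exceed the largest variance used along the manipulated path), we have $S_n\Phi^{-1}(1-\alpha/2)\le\overline{\sigma}\,\Phi^{-1}(1-\alpha/2)=c$. This yields the event inclusion
\[
\{\sqrt{n}\,|\bar{Z}_n-\mu_0|>c\}\subseteq\{\sqrt{n}\,|\bar{Z}_n-\mu_0|>S_n\Phi^{-1}(1-\alpha/2)\},
\]
and hence the actual error is bounded below by $\lim_{n\to\infty}\bbE[\mathbbm{1}(\sqrt{n}\,|\bar{Z}_n-\mu_0|>c)]$.

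Next I would evaluate this limit. Because $\theta^*$ is precisely the asymptotically optimal strategy attaining the supremum in~\eqref{eq:14} at the very constant $c=\overline{\sigma}\,\Phi^{-1}(1-\alpha/2)$, the probability of $\{\sqrt{n}\,|\bar{Z}_n-\mu_0|>c\}$ along the manipulated path converges to $p_3(c;\underline{\sigma},\overline{\sigma})$. Invoking the relation $p_3\approx 2p_1$ from~\eqref{eq:14} together with the closed form of $p_1$ in~\eqref{eq:5}, and using $c/\overline{\sigma}=\Phi^{-1}(1-\alpha/2)$ and $c>0$ (so only the $\phi(\cdot/\overline{\sigma})$ term contributes), a direct substitution gives
\[
p_1(c;\underline{\sigma},\overline{\sigma})
=\frac{2}{\overline{\sigma}+\underline{\sigma}}\int_c^\infty\phi(z/\overline{\sigma})\,dz
=\frac{2\overline{\sigma}}{\overline{\sigma}+\underline{\sigma}}\bigl[1-\Phi(c/\overline{\sigma})\bigr]
=\frac{\alpha\overline{\sigma}}{\overline{\sigma}+\underline{\sigma}}.
\]
Combining the three steps shows the actual type I error rate is at least $p_3(c)\approx 2p_1(c)=2\alpha\overline{\sigma}/(\overline{\sigma}+\underline{\sigma})$, as claimed.

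The main obstacle is justifying that the bound is in fact strict, i.e.\ the ``larger than'' in the statement rather than merely ``$\ge$''. The inclusion step alone gives equality in the limit; strictness comes from the fact that the optimal path genuinely mixes both $\overline{\sigma}$ and $\underline{\sigma}$, so $S_n$ stays asymptotically strictly below $\overline{\sigma}$, which makes the event inclusion strict on a set of positive probability. I would also need to be careful with the approximate equality $p_3\approx 2p_1$, which is where one relies on Corollary~4 of~\cite{peng2020hypothesis}: for $c>0$ the right tail $\{\sqrt{n}(\bar{Z}_n-\mu_0)>c\}$ and the left tail $\{\sqrt{n}(\bar{Z}_n-\mu_0)<-c\}$ become asymptotically separated, so maximizing the two-sided rejection probability decouples into separately maximizing each one-sided tail, each of which attains $p_1(c)$ by the symmetry of the G-normal law.
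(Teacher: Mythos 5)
Your proposal is correct and takes essentially the approach the paper intends: the paper omits the proofs in Appendix~\ref{sec:B}, stating they are similar to those in Section~\ref{sec:3}, and your argument is precisely the two-sided adaptation of the paper's proof of Proposition~\ref{prop:1} (use $S_{n}\leq\overline{\sigma}$ a.e.\ for large $n$ to get the event inclusion, then evaluate the limit along the optimally hacked path as $p_{3}(c;\underline{\sigma},\overline{\sigma})\approx 2p_{1}(c;\underline{\sigma},\overline{\sigma})=2\alpha\overline{\sigma}/(\overline{\sigma}+\underline{\sigma})$). Your extra remarks on strictness of the bound and on the tail decoupling behind $p_{3}\approx 2p_{1}$ go slightly beyond the paper's level of detail but are consistent with it.
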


The excellence of the robust rejection region $W_{{\tIII}}^{G}$~\eqref{WIIIG}  in controlling the type I error rate is confirmed by the following proposition.

\begin{proposition}
For a given nominal significance level $\alpha\ (0<\alpha<0.5)$, even if the experimenter chooses data $Z_{1},\ldots,Z_{n}$ according to Theorem \ref{thm:9} with $c=$ $\overline{\sigma}\Phi^{-1}\left(1-\frac{\alpha(\overline{\sigma}+\underline{\sigma})}{4\overline{\sigma}}\right)$, the robust rejection region $W_{{\tIII}}^{G}$~\eqref{WIIIG} given by the sublinear expectation can ensure that the actual type I error rate does not exceed $\alpha$.
\end{proposition}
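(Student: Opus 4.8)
The plan is to follow the template of Proposition~\ref{prop:2}: the optimal manipulation strategy of Theorem~\ref{thm:9} is, by design, the variance allocation that maximises the rejection probability of the two-sided region, so computing the worst-case type~I error amounts to evaluating the single limiting quantity $p_{3}(c_{3};\underline{\sigma},\overline{\sigma})$ at the critical value $c_{3}$ and comparing it with $\alpha$.

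First I would fix $\mu=\mu_{0}$, the unique point of the null $H_{0}:\mu=\mu_{0}$, and recall $W_{\tIII}^{G}=\{\sqrt{n}\,|\bar{Z}_{n}-\mu_{0}|>c_{3}\}$ with $c_{3}=\overline{\sigma}\Phi^{-1}(1-\alpha(\overline{\sigma}+\underline{\sigma})/(4\overline{\sigma}))$. Since the experimenter selects $Z_{1},\ldots,Z_{n}$ by the strategy of Theorem~\ref{thm:9} with threshold $c=c_{3}$, that strategy attains the supremum over $\{\sigma_{i}\}\in\Sigma(\underline{\sigma},\overline{\sigma})$ appearing in \eqref{eq:14}; letting $n\to\infty$, the actual asymptotic type~I error therefore equals $p_{3}(c_{3};\underline{\sigma},\overline{\sigma})$. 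The remaining task is to bound this quantity by $\alpha$.

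To that end I would decompose the two-sided event into the two disjoint tails $\{\sqrt{n}(\bar{Z}_{n}-\mu_{0})>c_{3}\}$ and $\{\sqrt{n}(\bar{Z}_{n}-\mu_{0})<-c_{3}\}$. Because the supremum of a sum is at most the sum of the suprema, and because the limiting G-normal law is symmetric so that each one-sided worst-case tail equals $p_{1}(c_{3};\underline{\sigma},\overline{\sigma})$, one obtains $p_{3}(c_{3};\underline{\sigma},\overline{\sigma})\le 2\,p_{1}(c_{3};\underline{\sigma},\overline{\sigma})$, which is the rigorous content behind the informal $\approx$ in \eqref{eq:14}. By construction $c_{3}$ solves $2\,p_{1}(c_{3};\underline{\sigma},\overline{\sigma})=\alpha$: indeed, solving $p_{1}(c)=\alpha/2$ as in \eqref{eq:c1} with $\alpha$ replaced by $\alpha/2$ returns exactly the stated $c_{3}$. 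Hence $p_{3}(c_{3};\underline{\sigma},\overline{\sigma})\le\alpha$, so the worst-case error does not exceed $\alpha$; and since every admissible strategy yields a rejection probability no larger than this supremum, the bound holds for all strategies in $\Sigma(\underline{\sigma},\overline{\sigma})$, not only the optimal one.

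The main obstacle, and the reason the claim is an inequality rather than the equality seen in Proposition~\ref{prop:2}, is the gap between $p_{3}$ and $2p_{1}$. For the one-sided tests the worst-case variance path serves a single tail, but the two-sided functional $\varphi(x)=\mathbbm{1}(|x|>c)$ asks a single path to inflate both tails at once, and the two optimal allocations do not coincide. The subadditivity bound $p_{3}\le 2p_{1}$ is what I would make precise, and it already suffices for the conclusion; the near-tightness that justifies $p_{3}\approx 2p_{1}$ would require the more delicate analysis of the viscosity solution $u$ to \eqref{eq:4} with the symmetric two-sided initial datum, as in Corollary~4 of \cite{peng2020hypothesis}.
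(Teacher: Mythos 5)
Your proposal is correct and takes essentially the route the paper intends: the paper omits this proof as ``similar to'' that of Proposition~\ref{prop:2}, i.e.\ the worst-case asymptotic type~I error under the optimal strategy of Theorem~\ref{thm:9} is the limiting quantity $p_{3}(c_{3};\underline{\sigma},\overline{\sigma})$, which is then compared with $\alpha$, and any other admissible strategy is dominated by this supremum. Your subadditivity step $p_{3}(c_{3};\underline{\sigma},\overline{\sigma})\le 2\,p_{1}(c_{3};\underline{\sigma},\overline{\sigma})=\alpha$ (using the symmetry $p_{2}(-c)=p_{1}(c)$ and the fact that $c_{3}$ solves $p_{1}(c)=\alpha/2$) is precisely the rigorous content behind the ``$\approx$'' in \eqref{eq:14}, and it correctly explains why this proposition asserts an inequality rather than the equality of Proposition~\ref{prop:2}.
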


\newpage
\bibliographystyle{plainnat}
\bibliography{reference}

\end{document}